\newtheorem{thm}{Theorem}
\newtheorem{theo}{Theorem}[section]
\newtheorem{defi}{Definition}[section]
\newtheorem{prop}{Proposition}[section]
\newtheorem{lem}{Lemma}[section]
\title[Complex manifolds of Sobolev mappings and a Hartogs-type theorem]{Complex manifolds of Sobolev mappings and a Hartogs-type theorem in loop spaces}
\author{M. Anakkar}
\thanks{mohammed.anakkar@univ-lille.fr}
\keywords{Hilbert-Hartogs manifold, analytic continuation, Sobolev spaces.}
\date{\today}
\newcommand{\cc}{\mathbb{C}}
\newcommand{\rr}{\mathbb{R}}
\newcommand{\calx}{\mathcal{X}}
\newcommand{\calu}{\mathcal{U}}
\newcommand{\calv}{\mathcal{V}}
\newcommand{\calb}{\mathcal{B}}
\newcommand{\calk}{\mathcal{K}}
\def\deff{{:=}}
\def\ie{{\textit{i.e.}}}
\def\d{{\partial}}
\newcommand{\norm}[1]{\left\Vert #1\right\Vert}
\def\eqqno(#1){\label{(#1)}}
\def\eqqref(#1){(\ref{(#1)})}
\def\slsf{\slshape \sffamily }
\def\sli{{\sl i)} } 
\def\slii{{\sl i$\!$i)} }
\def\ext{\sf{ext}}
\newcommand{\llbra}{[\![} % intervalle entier gauche
\newcommand{\rrbra}{]\!]} % intervalle entier droit
\begin{document}

\begin{abstract}
We recall the complex structure on the generalised loop spaces $W^{k,2}(S,X)$, where $S$ is a compact 
real manifold with boundary and $X$ is a complex manifold, and prove a Hartogs-type extension theorem 
for holomorphic maps from certain domains in generalized loop spaces. 
\end{abstract}

\maketitle

\tableofcontents

\section*{Introduction}
All manifolds in this paper are supposed to be Hausdorff and second countable. 
 In the first section we recall the notion of the Sobolev class $W^{k,2}(\Omega)$ for a domain $\Omega \subset \rr^n$ as well as its basic properties. We explain that a composition of a smooth map with a map of class $W^{k,2}$ is still in the class $W^{k,2}$. Namely we prove the following theorem which goes back to \cite{M}.
 \begin{thm}
 Let $k > \frac{n}{2}$, then for every $u \in W^{k,2}_{\text{loc}}(\Omega , \rr^m)$ and every $f \in \mathcal{C}^k(\rr^m)$ the function $f \circ u$ is in $W^{k,2}_{\text{loc}}(\Omega)$.
\end{thm}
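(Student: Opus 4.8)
The plan is to reduce to a local statement and then combine the multivariate chain rule (Fa\`a di Bruno's formula) with the Sobolev embedding theorem and H\"older's inequality, making the chain rule legitimate for a non-smooth $u$ by mollification. Fix relatively compact open sets $\Omega' \Subset \Omega'' \Subset \Omega$; since the conclusion is local, it suffices to show $f\circ u \in W^{k,2}(\Omega')$, and we may assume $u \in W^{k,2}(\Omega'')$. As $k > n/2$, Sobolev embedding gives $W^{k,2}(\Omega'') \hookrightarrow \mathcal C^0(\overline{\Omega''})$, so $u$ is continuous and $u(\overline{\Omega''})$ is contained in a ball $B \subset \rr^m$; then $\partial^\beta f$ is bounded and uniformly continuous on $\overline B$ for every $|\beta|\le k$.

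First I would record the integrability of the intermediate derivatives: for a multi-index $\gamma$ with $1 \le |\gamma| \le k$ one has $\partial^\gamma u \in W^{k-|\gamma|,2}_{\text{loc}}$, hence $\partial^\gamma u \in L^{q}_{\text{loc}}$ for every $q \in [2,\infty]$ with $\tfrac1q \ge (\tfrac12 - \tfrac{k-|\gamma|}{n})_{+}$, the value $q=\infty$ being allowed once $k-|\gamma|>n/2$. Now apply Fa\`a di Bruno's formula to a smooth map $v$: for $|\alpha|\le k$, $\partial^\alpha(f\circ v)$ is a finite sum of terms
\[
 (\partial^\beta f)(v)\,\prod_{i=1}^{r}\partial^{\gamma_i} v , \qquad r=|\beta| ,\quad |\gamma_i|\ge 1 ,\quad \sum_{i=1}^{r}\gamma_i=\alpha ,
\]
so only derivatives of $f$ of order $r\le|\alpha|\le k$ occur. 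Given such a term, choose $q_i\in[2,\infty]$ with $\partial^{\gamma_i}u\in L^{q_i}_{\text{loc}}$ as above; then $\sum_{i=1}^r 1/q_i \le 1/2$ can be arranged. Indeed, $\sum_{i}(\tfrac12-\tfrac{k-|\gamma_i|}{n})_{+} \le \tfrac12$, an elementary consequence of $\sum_i|\gamma_i|=|\alpha|\le k$ and $2k>n$ (this is essentially the estimate showing that $W^{k,2}_{\text{loc}}$ is a local algebra), and the strictness $2k>n$ leaves the slack needed to absorb any borderline indices with $k-|\gamma_i|=n/2$. Hence $\prod_i\partial^{\gamma_i}u\in L^2_{\text{loc}}$ by H\"older, and since $(\partial^\beta f)(u)$ is locally bounded, the corresponding Fa\`a di Bruno expression in the derivatives of $u$ — call it $G_\alpha$ — lies in $L^2_{\text{loc}}$ for each $|\alpha|\le k$. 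When $u$ is itself smooth this already gives $f\circ u\in W^{k,2}_{\text{loc}}$, together with a bound $\norm{f\circ u}_{W^{k,2}(\Omega')}\le C\big(\norm{u}_{W^{k,2}(\Omega'')},\norm{f}_{\mathcal C^k(\overline B)}\big)$ with $C$ nondecreasing in each argument.

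For a general $u$ I would mollify: $u_\varepsilon = u*\rho_\varepsilon$ is smooth near $\overline{\Omega'}$ for small $\varepsilon$, and $u_\varepsilon\to u$ in $W^{k,2}(\Omega'')$, hence also uniformly on $\overline{\Omega'}$ (again using $k>n/2$), so the $u_\varepsilon$ map $\overline{\Omega'}$ into a fixed ball $B$. Applying Fa\`a di Bruno to $u_\varepsilon$ and letting $\varepsilon\to 0$: each factor $(\partial^\beta f)(u_\varepsilon)\to(\partial^\beta f)(u)$ uniformly on $\overline{\Omega'}$, by uniform continuity of $\partial^\beta f$ on $\overline B$, and stays uniformly bounded, while $\partial^{\gamma_i}u_\varepsilon\to\partial^{\gamma_i}u$ in $L^{q_i}_{\text{loc}}$; repeated use of H\"older then yields $\partial^\alpha(f\circ u_\varepsilon)\to G_\alpha$ in $L^2(\Omega')$ for every $|\alpha|\le k$. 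Since also $f\circ u_\varepsilon\to f\circ u$ uniformly on $\overline{\Omega'}$, the $G_\alpha$ are the distributional derivatives of $f\circ u$, so $f\circ u\in W^{k,2}(\Omega')$; as $\Omega'$ was arbitrary, $f\circ u\in W^{k,2}_{\text{loc}}(\Omega)$.

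I expect the main obstacle to be the exponent bookkeeping in the second step — verifying that the Sobolev exponents of the factors $\partial^{\gamma_i}u$ always sum below $1/2$, which is precisely where $k>n/2$ is used and which fails at $k=n/2$ — together with the care needed at the borderline embeddings (where $W^{n/2,2}$ does not embed into $L^\infty$) and in passing to the limit inside the nonlinear Fa\`a di Bruno expressions.
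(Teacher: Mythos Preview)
Your argument is correct and takes a genuinely different route from the paper's. The paper approximates $f$ rather than $u$: it first proves via Fourier methods that $W^{k,2}$ is a Banach algebra under pointwise multiplication when $k>n/2$ (so that $P(u)\in W^{k,2}_{\text{loc}}$ for every polynomial $P$), then approximates $f$ by polynomials $P_s$ in the $\mathcal C^k$ norm on a compact set containing $u(\overline D)$, and finally shows via a chain-rule identity of the same Fa\`a di Bruno type that $(P_s(u))_s$ is Cauchy in $W^{k,2}(D)$; the $L^2$ control on the products $\sum D^{\alpha_1}u\cdots D^{\alpha_r}u$ is obtained by an induction that feeds back through the algebra property rather than through Sobolev embeddings. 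By contrast you keep $f$ fixed, mollify $u$, and control the Fa\`a di Bruno terms directly with the scale of Sobolev embeddings $W^{k-|\gamma|,2}\hookrightarrow L^{q}$ together with H\"older. Your approach is somewhat more streamlined (no Fourier transform, no Weierstrass approximation in $\mathcal C^k$) and makes the role of $k>n/2$ transparent in the exponent count, while the paper's approach isolates the reusable lemma that $W^{k,2}$ is an algebra and keeps the analysis entirely at the $L^2$ level.
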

This permits us to define correctly the space of Sobolev maps $W^{k,2}(S,X)$ between real manifolds $S$ and $X$ 
and provide the natural structure of a Hilbert manifold on this space. In the second section following Lempert 
\cite{L} we discuss the complex Hilbert structure of the Sobolev manifold $W^{k,2}(S,X)$, where $X$ is now a complex manifold. S everywhere is a compact real manifold with boundary. 
  
\smallskip For positive integers $q \geqslant 1 $, $n \geqslant 1$
and real $r \in ]0,1[$ the $q$-concave Hartogs figure  in $\cc^{q+n}$ is defined as
\begin{equation}
\label{(hart-qn)}
H_q^n(r):= \left(\Delta^q\times \Delta^n(r)\right)\cup \left(A^q_{1-r ,1}\times \Delta^n\right).
\end{equation}
where $A^q_{1-r,1}= \Delta^q \backslash \bar \Delta^q_{1-r}$. Here $\Delta^q_r$ stands for the polydisk in $\cc^q$ centered at zero of radius $r$. The envelope of holomorphy of $H_q^n(r)$ is $\Delta^{q+n}$.
We say that a complex manifold $X$ is $q$-Hartogs if every 
holomorphic mapping $f:H_q^1(r) \to X$ extends to a holomorphic mapping $\tilde f:\Delta^{q+1}\to X$.
If the same holds for a complex Hilbert manifold $\calx$ we say that $\calx$ is $q$-Hilbert-Hartogs.
We proved in \cite{A-Z} that if $\calx$ is $q$-Hilbert-Hartogs then every holomorphic mapping $f:H_q^n(r)
\to \calx$ extends to a holomorphic mapping $\tilde f : \Delta^{q+n}\to \calx$. For finite dimensional $X$ 
this was proved in \cite{I}.
In the last section of this paper we prove the following Hartogs-type extension theorem.
\begin{thm} 
\label{thm 2}
Let $\calx$ be a $q$-Hilbert-Hartogs manifold. Then every holomorphic map \\ $F:W^{k,2}(S,H_q^n(r))\to \calx$ 
extends to a holomorphic  map $\tilde F : W^{k,2}(S, \Delta^q\times \Delta^n) \to \calx$.
\end{thm}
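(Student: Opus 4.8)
\emph{Proof proposal.}
The plan is first to put everything inside one Hilbert space. Since $k>\tfrac{\dim S}{2}$, Theorem~1 and the Sobolev embedding make every map in $W^{k,2}(S,X)$ continuous, so for an open $D\subset\cc^{N}$ the set $W^{k,2}(S,D)$ is open in the Hilbert space $W^{k,2}(S,\cc^{N})$ (the image of a Sobolev map is relatively compact, hence at positive distance from $\partial D$). In particular $W^{k,2}(S,\Delta^q\times\Delta^n)=W^{k,2}(S,\Delta^q)\times W^{k,2}(S,\Delta^n)$ is open in $E:=W^{k,2}(S,\cc^{q})\oplus W^{k,2}(S,\cc^{n})$, and $W^{k,2}(S,H_q^n(r))$ is a connected open subset of $E$ containing the ``product Hartogs configuration''
$\bigl(W^{k,2}(S,\Delta^q)\times W^{k,2}(S,\Delta^n(r))\bigr)\cup\bigl(W^{k,2}(S,A^q_{1-r,1})\times W^{k,2}(S,\Delta^n)\bigr)$ (although it is strictly larger than it, the union being pointwise over $S$). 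Theorem~\ref{thm 2} thus asserts that this ``$q$-concave Hartogs figure in $E$'' has envelope of holomorphy $W^{k,2}(S,\Delta^q\times\Delta^n)$ for $\calx$-valued maps.

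The mechanism for the extension is a slicing construction feeding into the result of \cite{A-Z}: given $u_0=(v_0,w_0)\in W^{k,2}(S,\Delta^q\times\Delta^n)$ I would build a holomorphic map $\Phi\colon\Delta^{q+m}\to W^{k,2}(S,\Delta^q\times\Delta^n)$ (with $m=m(u_0)$) such that $\Phi\bigl(H_q^{m}(\rho)\bigr)\subset W^{k,2}(S,H_q^n(r))$ for some $\rho\in\,]0,1[$, and $\Phi(\zeta^{\ast})=u_0$ for a point $\zeta^{\ast}\in\Delta^{q+m}\setminus H_q^{m}(\rho)$. Then $F\circ\Phi|_{H_q^{m}(\rho)}\colon H_q^{m}(\rho)\to\calx$ is holomorphic, hence extends, since $\calx$ is $q$-Hilbert--Hartogs, to a holomorphic $\widetilde{F\circ\Phi}\colon\Delta^{q+m}\to\calx$ by the extension theorem of \cite{A-Z}, and one sets $\tilde F(u_0):=\widetilde{F\circ\Phi}(\zeta^{\ast})$. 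It then remains to check: (i) that this value is independent of $\Phi$; (ii) that $\tilde F=F$ on $W^{k,2}(S,H_q^n(r))$; (iii) that $\tilde F$ is holomorphic. For (i) I would use the identity principle together with the fact that any two admissible slices through $u_0$ can be joined by a path of admissible slices meeting $W^{k,2}(S,H_q^n(r))$, which follows from connectedness of $W^{k,2}(S,\Delta^q\times\Delta^n)$ and of the space of slices. Statement (ii) is built into the construction. For (iii) I would invoke the criterion that a map between complex Hilbert manifolds is holomorphic once it is holomorphic on every finite-dimensional complex slice and locally bounded, the local boundedness being supplied by the Cauchy-type estimates hidden in the Hartogs extensions.

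The main obstacle is the construction of the slices $\Phi$, and here the naive idea fails: one cannot simply translate $v_0$ into the shell $A^q_{1-r,1}$ nor scale $w_0$ into $\Delta^n(r)$, because on the concave face $A^q_{1-\rho,1}\times\Delta^{m}$ of $H_q^{m}(\rho)$ the last variables run over the full polydisk, while there is no bounded holomorphic function on $\Delta^{q}$ that is small on $A^q_{1-\rho,1}$ and equal to $1$ at an interior point. What does work cleanly is the case of constant $v_0=a'\in\Delta^{q}$: there one may take $m=1$ and
$$\Phi(\zeta',\zeta_{q+1})=\bigl(\,\text{const map }s\mapsto\zeta',\ \ \|w_0\|_{\infty}^{-1}\,\zeta_{q+1}\,w_0\,\bigr),\qquad \zeta^{\ast}=\bigl(a',\|w_0\|_{\infty}\bigr),$$
which carries $H_q^{1}(\rho)$ into $W^{k,2}(S,H_q^n(r))$ for any $\rho<\min(r,\|w_0\|_{\infty})$ and reaches $u_0$, exactly mimicking the finite-dimensional model. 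The genuine difficulty is therefore to reduce the general (non-constant, ``deep'') $v_0$ to this model. I would do this by exhausting $W^{k,2}(S,\cc^{q})$ (and $W^{k,2}(S,\cc^{n})$) by an increasing sequence of finite-dimensional subspaces $E_1\subset E_2\subset\cdots$ with dense union, compatible with the splitting of $E$ and containing the constant maps, and treating the finite-dimensional domains $\Omega_N:=W^{k,2}(S,H_q^n(r))\cap E_N$: exploiting compactness of $S$ one shows that $\Omega_N$ is, within $E_N$, a $q$-concave Hartogs figure whose envelope of holomorphy is $W^{k,2}(S,\Delta^q\times\Delta^n)\cap E_N$, so that the \cite{A-Z} machinery extends $F|_{\Omega_N}$ to all of $W^{k,2}(S,\Delta^q\times\Delta^n)\cap E_N$. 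The resulting partial extensions are coherent by the identity principle, yield $\tilde F$ on the dense set $\bigcup_N\bigl(W^{k,2}(S,\Delta^q\times\Delta^n)\cap E_N\bigr)$, and extend to the whole of $W^{k,2}(S,\Delta^q\times\Delta^n)$ by local boundedness and continuity; I expect verifying the italicised statement about $\Omega_N$ (equivalently, organising the needed finite-dimensional Hartogs figures) to be where most of the technical work sits.
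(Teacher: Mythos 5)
Your high-level scheme (slice through the given map by a finite-dimensional holomorphic family whose Hartogs-figure part lies in $W^{k,2}(S,H_q^n(r))$, extend $F$ along the slice using $q$-Hilbert-Hartogsness of $\calx$, evaluate at the distinguished point) is exactly the paper's Step 1. But the obstacle you declare fatal for non-constant $v_0=f^q$ is precisely what the paper's construction removes, and your fallback does not close the gap. The missing idea is to let the slice depend on $s$ through the Möbius automorphisms of the polydisk: with $h_a(z)=\bigl(\tfrac{a_1-z_1}{1-\bar a_1z_1},\dots,\tfrac{a_q-z_q}{1-\bar a_qz_q}\bigr)$, one takes the $(q+1)$-parameter family
\[
\psi^t(z,s)=\bigl(h_{f^q(s)}(z),\,t\,f^n(s)\bigr),\qquad z\in\bar\Delta^q,\ |t|\le 1+\delta .
\]
Since $f^q(S)\Subset\Delta^q$ is compact, when some $z_j$ is close to $\partial\Delta$ the $j$-th component of $h_{f^q(s)}(z)$ is close to $\partial\Delta$ uniformly in $s$, so for $z\in A^q_{1-\delta,1}$ the values lie in $A^q_{1-r,1}\times\Delta^n$, while for $|t|$ small they lie near $\Delta^q\times\{0\}$; hence $\psi_*$ maps a Hartogs figure $H_q^1(\delta)$ into $W^{k,2}(S,H_q^n(r))$, $F\circ\psi_*$ extends to $\Delta^{q+1}_{1+\delta}$, and the value at $(z,t)=(0,1)$, where $\psi^1(0,\cdot)=f$, defines $\tilde F(f)$. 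There is no bounded holomorphic function of $z$ alone doing this job, but the automorphism trick (an $s$-dependent change of variable rather than a translation or scaling) handles arbitrary, non-constant $f^q$ in one stroke; your reduction to constant $v_0$ and the exhaustion by finite-dimensional subspaces $E_N$ is therefore unnecessary, and moreover its key claim --- that $W^{k,2}(S,H_q^n(r))\cap E_N$ is a $q$-concave Hartogs figure in $E_N$ with envelope $W^{k,2}(S,\Delta^q\times\Delta^n)\cap E_N$ --- is unproven and essentially restates the theorem on $E_N$; the machinery of \cite{A-Z} applies to the standard figure $H_q^n(r)\subset\cc^{q+n}$, not to such intersections.

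Two further points where your sketch is too thin. First, because the paper attaches a canonical disk to each $f$, no ``independence of the slice'' is needed; your scheme does need it, and ``identity principle plus connectedness of the space of slices'' is not an argument. Second, holomorphy of $\tilde F$ cannot be obtained from ``holomorphic on finite-dimensional slices plus local boundedness'': the target is a complex Hilbert manifold, where local boundedness has no direct meaning. The paper instead proves continuity of $\tilde F$ directly (comparing the extended graphs over nearby $f$ inside a $1$-complete neighborhood of an embedded analytic disk, via Theorem \ref{royden-w}, the maximum principle for the plurisubharmonic exhaustion, and Lemma \ref{cont}), and then proves G\^ateaux holomorphy by an explicit $(q+2)$-parameter family in which $\bar f^q+\bar\lambda\bar g^q$ is polarized by an independent variable $\mu$, so that the extended value at $z=0$, $\mu=0$ is holomorphic in $\lambda$. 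Some replacement for these two steps would be required in any complete write-up.
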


This statement gives us an example of pairs of open sets $\calu\subsetneqq \hat \calu$ 
in a complex Hilbert manifold such that holomorphic mappings with values in $q$-Hilbert-Hartogs manifolds 
extend from $\calu $ to $\hat\calu$. It shows that  $\hat\calu\deff W^{k,2}(S, \Delta^q \times \Delta^n)$ is, 
in some sense, the ``envelope of holomorphy'' of $\calu\deff W^{k,2}(S,H_q^n(r))$. 
For compact $S$ without boundary this theorem was proved in \cite{A-I}.

\section{Sobolev maps between smooth manifolds}
Our goal in this section is to define the space of Sobolev $W^{k,2}$-maps from a manifold $S$ to a manifold $X$
and explain that this space possesses a natural topology.
Let $\Omega \subset \rr^n$ be a domain. 
\begin{defi} 
The Sobolev class $W^{k,2}(\Omega)$ 
is defined as follows.
$$ W^{k,2}(\Omega):= \left\lbrace u\in L^2(\Omega) \ | \  \forall \alpha \in \mathbb{N}^n, \ |\alpha| \leqslant k, \  D^{\alpha} u \in L^2(\Omega)  \right\rbrace , $$
where $L^2(\Omega)$ is the Hilbert space of equivalence classes of real valued square integrable functions with respect to the Lebesgue measure in $\rr^n$. 
\end{defi}

The space $W^{k,2}(\Omega)$ endowed with the scalar product 
\[
 <u, v>_{W^{k,2}(\Omega)}= \sum_{|\alpha|\leqslant k}{<D^{\alpha} u \ , \ D^{\alpha} v >_{L^2(\Omega)}}  
\]
is a Hilbert space. Recall that the Fourier transform of a function from the Schwarz space $u \in \mathcal{S}( \rr^n)$ is defined as
$$ \hat{u}(\xi):= \frac{1}{(\sqrt{2 \pi})^n} \int_{\rr^n}{u(x)e^{-i<x,\xi>}d\mu(x)}, $$
where $\mu$ is the Lebesgue measure in $\rr^n$.  
By the density of $\mathcal{S}(\rr^n)$ in $L^2(\rr^n)$ and using the Placherel identity we can extend the Fourier transform to the isometry of $L^2(\rr^n)$ onto itself, i.e. for $f \in L^2(\rr^n)$ one has that $||\hat f||_{L^2(\rr^n)}= \norm{f}_{L^2(\rr^n)}$.  
One can give the characterisation of Sobolev maps by the Fourier transform.
\vspace{4mm}

\hspace{1cm}$ u \in W^{k,2}(\rr^n) \Longleftrightarrow \forall \alpha \in \mathbb{N}^n, |\alpha| \leqslant k, \ \ D^{(\alpha)} u \in L^2(\rr^n) $

\hspace{35,5mm}$\Longleftrightarrow \forall \alpha \in \mathbb{N}^n, |\alpha| \leqslant k, \ \ \xi^\alpha \hat u \in L^2(\rr^n)$

\hspace{35,5mm}$\Longleftrightarrow \ (1+|\xi|^2)^{\frac{k}{2}}\hat u \in L^2(\rr^n)$
 \vspace{4mm}
 
\noindent Moreover the corresponding norms are equivalent. More precisely we have the following.
\begin{prop}
For $u \in W^{k,2}(\rr^n)$ we have the inequalities:
\begin{itemize}
\item[i)] $\norm{u}_{W^{k,2}(\rr^n)} \leqslant \norm{(1+|\xi|^2)^{\frac{k}{2}} \hat{u}}_{L^2(\rr^n)}$;

\item[ii)] there exists $a \in \rr^+$ wich depends on $k$ and $n$ only such that \begin{equation} \label{ineqL2}
\norm{(1+|\xi|^2)^{\frac{k}{2}} \hat{u}}_{L^2(\rr^n)} \leqslant a \norm{u}_{W^{k,2}(\rr^n)}.
\end{equation}
\end{itemize}
\end{prop}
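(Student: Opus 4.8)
The plan is to push everything through the Fourier transform: by the Plancherel identity recalled above, both the $W^{k,2}$-norm of $u$ and the weighted $L^2$-norm of $\hat u$ are $L^2$-norms of $\hat u$ against a polynomial weight, and the two weights are comparable by an elementary argument.

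Concretely, for $u\in W^{k,2}(\rr^n)$ and $|\alpha|\leqslant k$ one has $D^\alpha u\in L^2(\rr^n)$ and $\widehat{D^\alpha u}=(\isl\xi)^\alpha\hat u$ (this is the content of the equivalences stated just before the proposition); hence, by Plancherel,
\[
\norm{D^\alpha u}_{L^2(\rr^n)}^2=\norm{\widehat{D^\alpha u}}_{L^2(\rr^n)}^2=\int_{\rr^n}|\xi^\alpha|^2\,|\hat u(\xi)|^2\,d\mu(\xi),
\]
where $|\xi^\alpha|^2=\xi^{2\alpha}:=\prod_{j=1}^{n}\xi_j^{2\alpha_j}\geqslant 0$. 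Summing over $|\alpha|\leqslant k$,
\[
\norm{u}_{W^{k,2}(\rr^n)}^2=\int_{\rr^n}P_k(\xi)\,|\hat u(\xi)|^2\,d\mu(\xi),\qquad P_k(\xi):=\sum_{|\alpha|\leqslant k}\xi^{2\alpha},
\]
while directly $\norm{(1+|\xi|^2)^{\frac{k}{2}}\hat u}_{L^2(\rr^n)}^2=\int_{\rr^n}(1+|\xi|^2)^k\,|\hat u(\xi)|^2\,d\mu(\xi)$. So it is enough to establish the pointwise inequalities
\[
P_k(\xi)\ \leqslant\ (1+|\xi|^2)^k\ \leqslant\ C_{k,n}\,P_k(\xi)\qquad\text{for all }\xi\in\rr^n,
\]
with $C_{k,n}$ depending only on $k$ and $n$; integrating them against the nonnegative measure $|\hat u(\xi)|^2\,d\mu(\xi)$ and taking square roots then yields i) with constant $1$ and ii) with $a=\sqrt{C_{k,n}}$.

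For the pointwise inequalities I would simply expand $(1+|\xi|^2)^k=(1+\xi_1^2+\dots+\xi_n^2)^k$ by the multinomial theorem:
\[
(1+|\xi|^2)^k=\sum_{|\alpha|\leqslant k}\frac{k!}{(k-|\alpha|)!\,\alpha_1!\cdots\alpha_n!}\;\xi^{2\alpha}.
\]
Thus $(1+|\xi|^2)^k$ and $P_k$ are sums over the \emph{same} monomials $\xi^{2\alpha}$, $|\alpha|\leqslant k$, each of which is $\geqslant 0$ on $\rr^n$; comparing the two expansions coefficient by coefficient, and using that every multinomial coefficient $\frac{k!}{(k-|\alpha|)!\,\alpha_1!\cdots\alpha_n!}$ is a positive integer (so $\geqslant 1$), one gets at once the lower bound $P_k(\xi)\leqslant(1+|\xi|^2)^k$ and the upper bound $(1+|\xi|^2)^k\leqslant C_{k,n}P_k(\xi)$ with $C_{k,n}:=\max_{|\alpha|\leqslant k}\frac{k!}{(k-|\alpha|)!\,\alpha_1!\cdots\alpha_n!}$, which depends only on $k$ and $n$. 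This finishes the argument.

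There is essentially no hard step here; the only points deserving a line of justification are the identity $\widehat{D^\alpha u}=(\isl\xi)^\alpha\hat u$ and the applicability of Plancherel for $u$ merely in $W^{k,2}(\rr^n)$ rather than Schwartz, both of which are already subsumed in the Fourier-transform facts recorded just above the proposition.
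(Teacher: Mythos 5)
Your proposal is correct and follows essentially the same route as the paper's proof: reduce both norms via Plancherel to weighted $L^2$-integrals of $|\hat u|^2$ and compare the weights $\sum_{|\alpha|\leqslant k}\xi^{2\alpha}$ and $(1+|\xi|^2)^k$ pointwise through the multinomial expansion, whose coefficients are $\geqslant 1$ and bounded above by a constant depending only on $k$ and $n$. The only cosmetic difference is that you name the constant explicitly as the maximal multinomial coefficient, whereas the paper writes $a=\max_{|\alpha|\leqslant k}\sqrt{a_\alpha}$ for the same quantity.
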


\begin{proof}
First we compute the following:
\begin{eqnarray*}
\norm{u}_{W^{k,2}}^2 &= & {\sum_{|\alpha| \leqslant k} \norm{D^{(\alpha)} u }^2_{L^2}} = {\sum_{|\alpha| \leqslant k} \norm{\widehat{D^{(\alpha)} u }}^2_{L^2}} = {\sum_{|\alpha| \leqslant k} \norm{ \xi_1^{\alpha_1 }\ldots\xi_n^{\alpha_n }  \hat{u} }^2_{L^2}} \\
& = & {\sum_{|\alpha| \leqslant k} \int_{\rr^n}\xi_1^{2\alpha_1 }\ldots \xi_n^{2\alpha_n }  |\hat{u}(\xi)|^2 d\xi }={ \int_{\rr^n} \sum_{|\alpha| \leqslant k}\xi^{2\alpha } |\hat{u}(\xi)|^2 d\xi }.
\end{eqnarray*}

So we can establish the following equality 
$$ \norm{u}_{W^{k,2}} = \sqrt{ \int_{\rr^n} \sum_{|\alpha| \leqslant k}\xi^{2\alpha } |\hat{u}(\xi)|^2 d\xi }.$$

For $(i)$ we have $(1 + |\xi|^2)^k =(1 + \xi_1^2+ \ldots + \xi_n^2)^k= \sum_{|\alpha| \leqslant k} a_\alpha \xi^{2\alpha} \geqslant  \sum_{|\alpha| \leqslant k}\xi^{2\alpha } $ since $a_\alpha \geqslant 1$. And therefore
\begin{eqnarray*}
\norm{u}_{W^{k,2}}&=& \sqrt{ \int_{\rr^n} \sum_{|\alpha| \leqslant k}\xi^{2\alpha } |\hat{u}(\xi)|^2 d\xi } \leqslant \sqrt{ \int_{\rr^n} (1 + |\xi|^2)^k  |\hat{u}(\xi)|^2 d\xi } \leqslant \\
& \leqslant & \norm{(1+|\cdot|^2)^{\frac{k}{2}} \hat{u}}_{L^2}.
\end{eqnarray*}

As for $(ii)$, since $(1 + |\xi|^2)^k = \sum_{|\alpha| \leqslant k} a_\alpha \xi^{2\alpha} \leqslant  a^2 \sum_{|\alpha| \leqslant k}\xi^{2\alpha } $ with $a= \max_{|\alpha|\leqslant k}\sqrt{a_\alpha}$, we obtain 
\begin{eqnarray*}
\norm{(1+|\cdot|^2)^{\frac{k}{2}} \hat{u}}_{L^2} & =&  \sqrt{ \int_{\rr^n} (1 + |\xi|^2)^k  |\hat{u}(\xi)|^2 d\xi } \leqslant a \sqrt{ \int_{\rr^n} \sum_{|\alpha| \leqslant k}\xi^{2\alpha } |\hat{u}(\xi)|^2 d\xi } \\
& \leqslant &a \norm{u}_{W^{k,2}} . 
\end{eqnarray*}
Proposition is proved.
\end{proof}

Denote by $W^{k,2}_{\text{loc}}(\Omega)$ the space  of distributions $u \in \mathcal{D}'(\Omega) $ which are locally in $W^{k,2}$, i.e. for every relatively compact $D \Subset \Omega$  the restriction $u|_D \in W^{k,2}(D)$.
It is clear that for $u \in W^{k,2}_{\text{loc}}(\Omega)$ and a mapping $f:B \to \Omega$ from a domain $B \subset \rr^m$ to $\Omega$ of class $\mathcal{C}^k$ the composition $u \circ f \in W^{k,2}_{\text{loc}}(B)$.  

Recall the Sobolev imbedding theorem: \textit{ if $f \in W^{k,2}(\Omega)$ and $k > m + \frac{n}{2}$, then there exists a $m$-times continuously differentiable function on $\Omega$ that is equal to $f$ almost everywhere}.
 Therefore the condition $k>\frac{n}{2}$ will be always assumed along this text in order to insure that all $u \in W^{k,2}$ are at least continuous.

\begin{theo}
\label{Ck-Sobo} Let $k > \frac{n}{2}$, then for every $u \in W^{k,2}_{\text{loc}}(\Omega)$ and $f \in \mathcal{C}^k(\rr)$ the function $f \circ u$ is in $W^{k,2}_{\text{loc}}(\Omega)$.
\end{theo}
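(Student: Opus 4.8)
The plan is to prove the result by localizing and then using the Fourier-analytic characterization of $W^{k,2}$ together with the product estimates that the chain rule will generate. Since the statement is local, I fix a relatively compact ball $D \Subset \Omega$ and, after a cutoff, reduce to the case $u \in W^{k,2}(\rr^n)$ with values in a bounded set, so that $f$ and all its derivatives up to order $k$ are bounded on the relevant range. The key point is the chain rule: for a multi-index $\alpha$ with $|\alpha| \le k$, the Fa\`a di Bruno formula expresses $D^\alpha(f\circ u)$ as a finite sum of terms of the form $(D^\beta f)(u)\cdot \prod_{j} D^{\gamma_j} u$, where $\sum_j \gamma_j = \alpha$ and each $|\gamma_j|\ge 1$, so that the number of factors is at most $|\alpha|\le k$. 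Thus it suffices to show each such product lies in $L^2(D)$.

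First I would treat the scalar factor $(D^\beta f)(u)$: since $k>\frac n2$, Theorem~1 (the Sobolev imbedding) gives $u \in \mathcal{C}^0$, hence $u(D)$ is bounded and $(D^\beta f)(u)$ is a bounded continuous function on $D$, so multiplication by it is harmless. It remains to estimate $\prod_{j=1}^p D^{\gamma_j} u$ in $L^2$, where $1\le |\gamma_j|$ and $\sum |\gamma_j| \le k$. I would establish the needed multiplicative inequality
\[
\norm{\prod_{j=1}^p D^{\gamma_j} u}_{L^2(D)} \le C \prod_{j=1}^p \norm{u}_{W^{k,2}(D)},
\]
by the standard interpolation / Gagliardo--Nirenberg argument: each $D^{\gamma_j}u$ is a derivative of order $\ell_j:=|\gamma_j|$ of a $W^{k,2}$ function, and one distributes $L^{p_j}$ exponents with $\sum 1/p_j = 1/2$ chosen so that $W^{k-\ell_j,2}\hookrightarrow L^{p_j}$ (or $W^{\ell_j',2}\hookrightarrow L^{p_j}$ for suitable $\ell_j'$), using $k>\tfrac n2$ to guarantee all these imbeddings hold with room to spare. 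Concretely, when $p=1$ this is just $D^{\alpha}u\in L^2$; when $p\ge 2$ one has $|\gamma_j|\le k-1$, and the surplus $k-\tfrac n2>0$ in the imbeddings absorbs the loss, so the product is integrable.

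Putting these pieces together: $D^\alpha(f\circ u)$ is a finite sum of products of an $L^\infty$ function with an $L^2$ function, hence lies in $L^2(D)$, for every $|\alpha|\le k$; therefore $f\circ u \in W^{k,2}(D)$. Since $D\Subset\Omega$ was arbitrary, $f\circ u \in W^{k,2}_{\mathrm{loc}}(\Omega)$. I expect the main obstacle to be the careful bookkeeping in the multiplicative estimate for $\prod_j D^{\gamma_j}u$ — choosing the H\"older exponents $p_j$ and invoking the right Sobolev imbedding for each factor, keeping track of the constraint $\sum|\gamma_j|\le k$ and the strict inequality $k>\tfrac n2$ — together with justifying the distributional chain rule (Fa\`a di Bruno) at the limited regularity $f\in\mathcal C^k$, which is handled by a routine mollification of $u$ and passage to the limit using the estimates just derived.
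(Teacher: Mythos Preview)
Your approach is correct and is a genuinely different route from the paper's. The paper proceeds by first proving, via the Fourier characterization, that $W^{k,2}(\rr^n)$ is a Banach algebra when $k>n/2$ (Lemma~\ref{product sobo}); it then approximates $f$ by polynomials $P_s$ in the $\mathcal C^k$-norm, so that $P_s(u)\in W^{k,2}$ by the algebra property, and finally shows $(P_s(u))_s$ is Cauchy in $W^{k,2}(D)$ using the explicit chain rule for polynomials together with an inductive argument (again based on the algebra property) that the ``derivative blocks'' $\sum_{\alpha_1+\cdots+\alpha_r=\alpha} D^{\alpha_1}u\cdots D^{\alpha_r}u$ lie in $L^2(D)$. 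In particular the paper never applies Fa\`a di Bruno to $f$ itself, only to polynomials, which sidesteps the regularity issue you plan to handle by mollification.

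Your strategy instead attacks $D^\alpha(f\circ u)$ directly: the $(D^\beta f)(u)$ factors are bounded, and the products $\prod_j D^{\gamma_j}u$ are placed in $L^2$ by distributing H\"older exponents and invoking Sobolev embeddings $W^{k-|\gamma_j|,2}\hookrightarrow L^{p_j}$. This works because, when $p\ge 2$, the scaling computation $\sum_j\bigl(\tfrac12-\tfrac{k-|\gamma_j|}{n}\bigr)\le \tfrac{p}{2}-\tfrac{k(p-1)}{n}<\tfrac12$ (using $\sum|\gamma_j|\le k$ and $k>n/2$) leaves room to choose admissible $p_j$'s with $\sum 1/p_j=1/2$. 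The trade-off: your argument is shorter and conceptually direct, but imports the Sobolev embedding and Gagliardo--Nirenberg machinery as black boxes and requires the mollification step to justify the distributional chain rule; the paper's argument is more self-contained (everything is built from the Fourier side and the single product lemma) and the polynomial-approximation device avoids the chain-rule justification altogether, at the cost of a longer and somewhat more intricate induction.
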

\noindent \textit{Proof.} The proof will be achieved in three steps.
 
\noindent Step 1:
We shall state it in the form of a lemma. 
%prove that the product of two Sobolev maps is still a Sobolev map. 
\begin{lem} \label{lemineq}
For $ \xi, \eta \in \rr^n$ and $t \geqslant 0$ we have the following inequality:
\begin{equation}
(1+|\xi|^2)^t \leqslant 4^t \big( (1+|\xi - \eta |^2)^t + (1+|\eta|^2)^t \big).
\end{equation}
\end{lem}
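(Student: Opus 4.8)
The plan is to reduce the inequality to an elementary bound on the base $1+|\xi|^2$ and then exploit only the monotonicity of the map $x\mapsto x^t$ on $[0,\infty)$, so that no case distinction on whether $t\leqslant 1$ or $t\geqslant 1$ is required.

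First I would record the pointwise estimate on the base. By the triangle inequality $|\xi|\leqslant |\xi-\eta|+|\eta|$ together with the bound $(a+b)^2\leqslant 2a^2+2b^2$ one gets $|\xi|^2\leqslant 2|\xi-\eta|^2+2|\eta|^2$, and hence
\begin{equation*}
1+|\xi|^2\;\leqslant\; 1+2|\xi-\eta|^2+2|\eta|^2\;\leqslant\; 2\bigl(1+|\xi-\eta|^2\bigr)+2\bigl(1+|\eta|^2\bigr).
\end{equation*}
Since the right-hand side is at most $4\max\bigl\{1+|\xi-\eta|^2,\,1+|\eta|^2\bigr\}$, we arrive at
\begin{equation*}
1+|\xi|^2\;\leqslant\;4\,\max\bigl\{1+|\xi-\eta|^2,\ 1+|\eta|^2\bigr\}.
\end{equation*}

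Next I would raise both sides to the power $t\geqslant 0$. As $x\mapsto x^t$ is non-decreasing on $[0,\infty)$ and $(4m)^t=4^t m^t$, this gives
\begin{equation*}
(1+|\xi|^2)^t\;\leqslant\;4^t\,\max\bigl\{(1+|\xi-\eta|^2)^t,\ (1+|\eta|^2)^t\bigr\}.
\end{equation*}
Finally, the maximum of two non-negative numbers is bounded by their sum, so the right-hand side is at most $4^t\bigl((1+|\xi-\eta|^2)^t+(1+|\eta|^2)^t\bigr)$, which is exactly the asserted inequality.

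There is no serious obstacle here; the only point worth a moment's care is to route the argument through the $\max$ of the two quantities rather than through sub- or super-additivity of $x\mapsto x^t$, since the latter goes in opposite directions for $t<1$ and $t>1$. The $\max$ formulation makes the estimate uniform in $t\geqslant 0$ and simultaneously accounts for the constant $4^t$.
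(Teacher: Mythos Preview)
Your proof is correct and follows essentially the same route as the paper's: bound $1+|\xi|^2$ via the triangle inequality and the estimate $(a+b)^2\leqslant 2a^2+2b^2$, pass to $4\max\{1+|\xi-\eta|^2,\,1+|\eta|^2\}$, raise to the power $t$, and then replace the maximum by the sum. The only cosmetic difference is that you first establish the base inequality and then raise to the power $t$, whereas the paper carries the exponent $t$ through each step.
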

\begin{proof}
Since $|\xi - \eta + \eta|^2 \leqslant \Big(|\xi - \eta| + |\eta|\Big)^2 \leqslant 2 \Big(|\xi - \eta|^2 + |\eta|^2\Big)$
\begin{eqnarray*}
(1+|\xi|^2)^t & = & (1+|\xi- \eta +\eta |^2)^t \leqslant (1+2|\xi- \eta|^2 +2 |\eta |^2)^t \\
& \leqslant & (4+2|\xi- \eta|^2 +2 |\eta |^2)^t \leqslant 2^t(1+|\xi- \eta|^2 + 1+ |\eta |^2)^t \\
& \leqslant & 2^t \Big( 2 \max(1+|\xi- \eta|^2 , 1+ |\eta |^2)\Big)^t \\
&\leqslant &  4^t \Big(  \max(1+|\xi- \eta|^2 , 1+ |\eta |^2)\Big)^t \\
& \leqslant & 4^t \Big( \max(1+|\xi- \eta|^2 , 1+ |\eta |^2)\Big)^t + 4^t \Big( \min(1+|\xi- \eta|^2 , 1+ |\eta |^2)\Big)^t \\
& =& 4^t ( 1+|\xi- \eta|^2)^t + 4^t(1+ |\eta |^2)^t .
\end{eqnarray*}
Lemma is proved
\end{proof}
\noindent Step 2: Let us state it again as a lemma.
\begin{lem}
\label{product sobo}
If $k > n/2$ then the product of two Sobolev maps $u, v \in W^{k,2}(\rr^n)$ is in $W^{k,2}(\rr^n)$ with the estimate
$$\norm{u v}_{W^{k,2}(\rr^n)} \leqslant C \norm{u}_{W^{k,2}(\rr^n)} \norm{v}_{W^{k,2}(\rr^n)}.$$ 
\end{lem}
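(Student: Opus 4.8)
The plan is to work on the Fourier transform side, where the $W^{k,2}(\rr^n)$ norm is equivalent (by the preceding Proposition) to $\norm{(1+|\xi|^2)^{k/2}\hat u}_{L^2}$. Set $m_k(\xi) = (1+|\xi|^2)^{k/2}$. Since $\widehat{uv} = (2\pi)^{-n/2}\,\hat u * \hat v$, it suffices to estimate $\norm{m_k (\hat u * \hat v)}_{L^2}$ in terms of $\norm{m_k \hat u}_{L^2}$ and $\norm{m_k \hat v}_{L^2}$. The first step is to apply Lemma~\ref{lemineq} with $t = k/2$ to the weight inside the convolution integral: for each $\xi$ and each splitting $\xi = (\xi-\eta) + \eta$ we have $m_k(\xi) \leqslant 2^k\big(m_k(\xi-\eta) + m_k(\eta)\big)$. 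Inserting this under the integral defining $(\hat u * \hat v)(\xi)$ and using the triangle inequality gives the pointwise bound
\begin{equation*}
m_k(\xi)\,|(\hat u * \hat v)(\xi)| \leqslant 2^k\big( (m_k|\hat u|) * |\hat v| \,(\xi) + |\hat u| * (m_k|\hat v|)\,(\xi)\big).
\end{equation*}

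The second step is to take $L^2$ norms in $\xi$ of both terms on the right. By Young's inequality for convolutions, $\norm{f * g}_{L^2} \leqslant \norm{f}_{L^2}\norm{g}_{L^1}$, so $\norm{(m_k|\hat u|)*|\hat v|}_{L^2} \leqslant \norm{m_k\hat u}_{L^2}\,\norm{\hat v}_{L^1}$ and symmetrically for the other term. It therefore remains to control $\norm{\hat v}_{L^1}$ (and $\norm{\hat u}_{L^1}$) by the Sobolev norm — this is exactly the point where the hypothesis $k > n/2$ enters. The third step is the Cauchy--Schwarz estimate
\begin{equation*}
\norm{\hat v}_{L^1} = \int_{\rr^n} \frac{1}{m_k(\xi)}\cdot m_k(\xi)|\hat v(\xi)|\,d\xi \leqslant \Big(\int_{\rr^n}(1+|\xi|^2)^{-k}\,d\xi\Big)^{1/2}\norm{m_k\hat v}_{L^2},
\end{equation*}
and the integral $\int_{\rr^n}(1+|\xi|^2)^{-k}\,d\xi$ converges precisely because $2k > n$. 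Combining the three steps and using the norm equivalence of the Proposition in both directions yields $\norm{uv}_{W^{k,2}} \leqslant C\norm{u}_{W^{k,2}}\norm{v}_{W^{k,2}}$ with $C$ depending only on $k$ and $n$.

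I do not expect any serious obstacle here: Lemma~\ref{lemineq} is tailor-made for the first step, and the rest is the classical Schauder-ring argument for $W^{k,2}$ with $k>n/2$. The only mild subtlety is bookkeeping — one should first verify the estimate for $u,v$ in a dense class (say Schwarz functions, or compactly supported smooth functions) where all the manipulations with convolutions and Fubini are unproblematic, and then pass to the limit using the a priori bound; and one must be slightly careful that $\hat u \in L^1$ already gives $u$ continuous and bounded, which is consistent with the Sobolev embedding already invoked. None of this requires the full strength of fractional-order arguments; the integer case with the explicit weight $(1+|\xi|^2)^{k/2}$ suffices.
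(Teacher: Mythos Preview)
Your proposal is correct and follows essentially the same route as the paper: pass to the Fourier side, apply Lemma~\ref{lemineq} with $t=k/2$ to distribute the weight $(1+|\xi|^2)^{k/2}$ onto the two factors of the convolution, bound the resulting $L^2$ norms, and then use Cauchy--Schwarz together with the finiteness of $\int (1+|\xi|^2)^{-k}\,d\xi$ for $k>n/2$ to control $\norm{\hat u}_{L^1}$ and $\norm{\hat v}_{L^1}$. The only cosmetic difference is that where you invoke Young's inequality $\norm{f*g}_{L^2}\le \norm{f}_{L^2}\norm{g}_{L^1}$, the paper spells this step out via Minkowski's integral inequality; these are the same device here.
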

\begin{proof}
Since $\widehat{uv} = \hat u * \hat v$ we have 
\vspace{0.5cm}

\noindent 
$\norm{uv}_{W^{k,2}} \leqslant \norm{\widehat{uv}(1+|\cdot|^2)^\frac{k}{2}}_{L^2}=\sqrt{ \int_{\rr^n}|(\hat{u}*\hat{v})(\xi)|^2(1+|\xi|^2)^{k} d\xi } =$
\begin{eqnarray*}
 &= &   \sqrt{ \int_{\xi \in \rr^n}\left|\int_{\eta \in \rr^n}\hat{u}(\xi -\eta) \hat{v}(\eta) (1+|\xi|^2)^{\frac{k}{2}} \ d\eta \right|^2 d\xi }  \\
   %ligne 
 &  \leqslant & \sqrt{ \int_{\xi \in \rr^n}\left(\int_{\eta \in \rr^n}|\hat{u}(\xi -\eta) \hat{v}(\eta)| (1+|\xi|^2)^{\frac{k}{2}} \ d\eta \right)^2 d\xi }  \\
  %ligne 
    & \leqslant & \norm{\int_{ \rr^n} 4^k|\hat{u}(\xi -\eta) \hat{v}(\eta)| (1+|\xi - \eta|^2)^{\frac{k}{2}} d\eta}_{L^2} +\norm{\int_{ \rr^n} 4^k|\hat{u}(\xi -\eta) \hat{v}(\eta)| (1+|\eta|^2)^{\frac{k}{2}} \ d\eta  }_{L^2}  \\
\end{eqnarray*}

The second inequality here follows from lemma \ref{lemineq}.  
In the second integral, we make the change of variable $s= \xi -\eta$ and get 
$$   \norm{\int_{\eta \in \rr^n} 4^k|\hat{u}(\xi -\eta) \hat{v}(\eta)| (1+|\eta|^2)^{\frac{k}{2}} \ d\eta  }_{L^2} = \hspace{8cm}$$
\begin{flalign*} 
 &= \sqrt{\int_{\xi \in \rr^n} \left( 4^k\int_{\eta \in \rr^n} |\hat{u}(\xi -\eta) \hat{v}(\eta)| (1+|\eta|^2)^{\frac{k}{2}} \ d\eta \right)^2 d\xi } \\
&= 4^k \sqrt{\int_{\rr^n} \left( \int_{s \in \rr^n} |\hat{u}(s) \hat{v}(\xi -s)| (1+|\xi -s|^2)^{\frac{k}{2}} \ ds \right)^2 d\xi }  \\
 & \leqslant 
  4^k \norm{\int_{s \in \rr^n} |\hat{u}(s) \hat{v}(\xi -s)| (1+|\xi -s|^2)^{\frac{k}{2}} \ ds}_{L^2} .
\end{flalign*}

Let recall the Minkowski's inequality for integrals, see \cite{S} \S A.1. 
For $p \in ]1, + \infty[$ and $F(x,y)$ a measurable function on the product measure space $X \times Y$ one has
\begin{equation} \label{minkowskineq}
\left( \int_Y \left| \int_X F(x,y)dx \right|^p  dy \right)^{\frac{1}{p}}
\leqslant \int_X \left( \int_Y |F(x,y)|^p dy \right)^{\frac{1}{p}} dx. 
\end{equation}
In other words, we have the inequality : $$\norm{\int_X F(x,y ) dx}_{L^p}
\leqslant \int_X  \norm{F(x,y)}_{L^p} dx. $$
Therefore by Minkowski integral inequality (\ref{minkowskineq}) we obtain
\noindent 
\begin{align*}
& \norm{uv}_{W^{k,2}}  \leqslant &  \\
 \leqslant  & 4^k \norm{\int_{\eta \in \rr^n} |\hat{u}(\xi -\eta) \hat{v}(\eta)| (1+|\xi - \eta|^2)^{\frac{k}{2}} d\eta}_{L^2} +4^k \norm{\int_{s \in \rr^n} |\hat{u}(s) \hat{v}(\xi -s)| (1+|\xi -s|^2)^{\frac{k}{2}} \ ds}_{L^2} & \\ 
    \leqslant &  4^k \int_{\eta \in \rr^n}\norm{ \hat{u}(\xi -\eta) \hat{v}(\eta) (1+|\xi - \eta|^2)^{\frac{k}{2}}}_{L^2} d\eta + 4^k \int_{s \in \rr^n} \norm{\hat{u}(s) \hat{v}(\xi -s) (1+|\xi -s|^2)^{\frac{k}{2}}}_{L^2} ds &	 \\ 
  \leqslant & 4^k \int_{\eta \in \rr^n}| \hat{v}(\eta)|\norm{ \hat{u}(\xi -\eta) (1+|\xi - \eta|^2)^{\frac{k}{2}}}_{L^2} d\eta + 4^k \int_{\rr^n}|\hat{u}(s)| \norm{ \hat{v}(\xi -s) (1+|\xi -s|^2)^{\frac{k}{2}}}_{L^2} ds  & \\ 
       \leqslant  & 4^k \int_{\eta \in \rr^n}| \hat{v}(\eta)|a \norm{u}_{W^{k,2}} d\eta +4^k \int_{s \in \rr^n}|\hat{u}(s)| a\norm{ v}_{W^{k,2}} ds . & \\ 
  \end{align*}

  In the first integral we transform as follows. 
%\begin{small} 
\begin{eqnarray}
 4^k \int_{\eta \in \rr^n}| \hat{v}(\eta)|a \norm{u}_{W^{k,2}} d\eta & \leqslant & 4^k a\norm{u}_{W^{k,2}}  \int_{\eta \in \rr^n}| \hat{v}(\eta)|(1+|\eta|^2)^{\frac{k}{2}}(1+|\eta|^2)^{-\frac{k}{2}}d\eta \leqslant \\ 
 &\leqslant & 4^k a^2\norm{u}_{W^{k,2}} \norm{v}_{W^{k,2}}\norm{(1+|\xi|^2)^{-\frac{k}{2}}}_{L^2}.
\end{eqnarray}
We make the same with the second integral and obtain
  \begin{eqnarray*}
\norm{uv}_{W^{k,2}} &  \leqslant  & 4^k a^2\norm{u}_{W^{k,2}} \norm{v}_{W^{k,2}}\norm{(1+|\xi|^2)^{-\frac{k}{2}}}_{L^2} + 4^k a^2 \norm{u}_{W^{k,2}} \norm{v}_{W^{k,2}}\norm{(1+|\xi|^2)^{-\frac{k}{2}}}_{L^2}  \\ 
&\leqslant &\Big( 2^{2k+1} a^2\norm{(1+|\xi|^2)^{-\frac{k}{2}}}_{L^2} \Big) \norm{u}_{W^{k,2}} \norm{v}_{W^{k,2}} . \\ 
\end{eqnarray*}
Lemma is proved.
\end{proof}

\noindent This step implies that the product of two $W^{k,2}_{\text{loc}}(\Omega)$ maps are still in $W^{k,2}_{\text{loc}}(\Omega)$ and consequently this implies that
\label{P(u)W^k,2}
for any polynomial $P$ and for any $u \in W^{k,2}_{\text{loc}}(\Omega)$ 
the function $P( u )\in W^{k,2}_{\text{loc}}(\Omega) $.

\noindent Step 3 : For $I$ a closed interval one defines the norm on the space $\mathcal{C}^k(I)$ by
$$ \norm{f}_{\mathcal{C}^k(I)} = \sum_{j=0}^k{\sup_{x \in I}|f^{(j)}(x)}| .$$
Remark that:
\begin{itemize}
\item The norm $\norm{f}_{\mathcal{C}^0(I)} = \sup_{x \in I}|f(x)|$  correspond to the usual sup-norm. 
\item The space $\mathcal{C}^k(I)$ endowed with the norm $\norm{\cdot}_{\mathcal{C}^k(I)}$ is a Banach space.  
\end{itemize}

\noindent Recall the approximation theorem of Weirstrass:
	
Let $f \in \mathcal{C}^0(I)$. Then for every $\epsilon>0$, there exists a polynomial $P$ such that for all $x\in [a,b]$, we have $|f(x)-P(x)|<\epsilon$, i.e. $\norm{f-P}_{\mathcal{C}^0(I)}<\epsilon$. 

\noindent From that statement, one can deduce an approximation with respect to the norm $\mathcal{C}^k$:
\begin{prop}
Any function $f:I \to \mathbb{R}$ of class $\mathcal{C}^k$ defined on a closed interval $I$ can be uniformly approximated by polynomials with respect to the norm $\mathcal{C}^k$, i.e.
$$\forall \epsilon>0, \ \ \exists \  P \in \mathbb{R}[X],\ \text{such that} \  \norm{f - P}_{\mathcal{C}^k(I)}< \epsilon. $$
\end{prop}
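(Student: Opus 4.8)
The plan is to reduce everything to the version of the Weierstrass theorem already recorded above, applied to the top-order derivative $f^{(k)}$, and then to recover control of the lower-order derivatives by repeated integration while carefully fixing the constants of integration. Write $I = [a,b]$. First I would apply the approximation theorem of Weierstrass to $f^{(k)} \in \mathcal{C}^0(I)$: for a parameter $\delta > 0$ to be fixed later, there is a polynomial $Q \in \mathbb{R}[X]$ with $\norm{f^{(k)} - Q}_{\mathcal{C}^0(I)} < \delta$.

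Next I would integrate $Q$ exactly $k$ times, at each stage choosing the constant of integration so that the antiderivative agrees with the corresponding derivative of $f$ at the point $a$. Concretely, set $P_k := Q$ and, for $j = k-1, k-2, \dots, 0$, define
\[
P_j(x) := f^{(j)}(a) + \int_a^x P_{j+1}(t)\,dt .
\]
Since an antiderivative of a polynomial is again a polynomial, each $P_j \in \mathbb{R}[X]$; put $P := P_0$. By construction $P^{(j)} = P_j$ for $0 \le j \le k$, in particular $P^{(k)} = Q$, and $P^{(j)}(a) = f^{(j)}(a)$ for $0 \le j \le k-1$.

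Then I would estimate $\norm{f^{(j)} - P^{(j)}}_{\mathcal{C}^0(I)}$ by downward induction on $j$. For $j = k$ it is $< \delta$. For the inductive step, the fundamental theorem of calculus together with $f^{(j)}(a) - P^{(j)}(a) = 0$ gives, for every $x \in I$,
\[
f^{(j)}(x) - P^{(j)}(x) = \int_a^x \big( f^{(j+1)}(t) - P^{(j+1)}(t) \big)\,dt ,
\]
hence $\norm{f^{(j)} - P^{(j)}}_{\mathcal{C}^0(I)} \le (b-a)\,\norm{f^{(j+1)} - P^{(j+1)}}_{\mathcal{C}^0(I)}$, and iterating down to $j$ we get $\norm{f^{(j)} - P^{(j)}}_{\mathcal{C}^0(I)} \le (b-a)^{\,k-j}\,\delta$. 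Summing over $j$ yields
\[
\norm{f - P}_{\mathcal{C}^k(I)} = \sum_{j=0}^{k} \norm{f^{(j)} - P^{(j)}}_{\mathcal{C}^0(I)} \le \delta \sum_{j=0}^{k} (b-a)^{\,k-j} = C(k,I)\,\delta ,
\]
so it suffices to have chosen $\delta < \epsilon / C(k,I)$ from the start.

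There is no real obstacle in this argument; the only points that require a little attention are the bookkeeping of the constants of integration, so that it is the lower-order derivatives of $P$ — not merely $P^{(k)}$ — that are forced to stay close to those of $f$, and the use of compactness of $I$, which guarantees $b - a < \infty$ and that all the sup-norms appearing are finite. Everything else is elementary.
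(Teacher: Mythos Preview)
Your argument is correct and follows essentially the same route as the paper: approximate $f^{(k)}$ by a polynomial via Weierstrass, integrate $k$ times from $a$ with constants of integration $f^{(j)}(a)$, and bound the lower-order errors by induction. The only cosmetic difference is that the paper tracks the sharper pointwise bound $|P_{k-l}(x)-f^{(k-l)}(x)|\le \epsilon\,(x-a)^{l}/l!$, giving the constant $\sum_{l=0}^{k}(b-a)^{l}/l!$ instead of your $\sum_{j=0}^{k}(b-a)^{k-j}$, but this changes nothing of substance.
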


\begin{proof}Let $I= [a,b]$ with $a,b \in \mathbb{R}$. 
Since $f$ is $\mathcal{C}^k$, use the approximation theorem on $f^{(k)}$. It exists $P_k$ in $\mathbb{R}[X]$ such that $\norm{P_k - f^{(k)}}_{\mathcal{C}^0(I)}< \epsilon$. Then we define 
$$\forall \ l \smallskip \in \llbra 1 , k \rrbra, \  P_{k-l}(x) = \int_a^x P_{k-l+1}(t)dt + f^{(k-l)}(a).$$
We can prove by induction on $ l$ that %the following inequality
\begin{equation}
\label{*}
\forall \ l \smallskip \in \llbra 0 , k \rrbra, \   \forall x \in [a,b], \  \left| P_{k-l}(x) - f^{(k-l)}(x)  \right| \leqslant \epsilon \frac{(x-a)^{l}}{l!}.
\end{equation}
Indeed for $l=0$ the assertion (\ref{*}) is true and we suppose that (\ref{*}) is true for $l-1$. Then we can write the following.
\begin{eqnarray}
| P_{k-l}(x) &-& f^{(k-l)}(x) |=\left| \int_a^x P_{k-l+1}(t)dt + f^{(k-l)}(a) - f^{(k-l)}(x)  \right| = \\
 &= &  \left|  \int_a^x \Big( P_{k-l+1}(t)- f^{(k-l+1)}(t) \Big) dt \right| 
\leqslant   \int_a^x \left| P_{k-l+1}(t)- f^{(k-l+1)}(t) \right|   dt  \\
& \leqslant & \int_a^x \epsilon \frac{(t-a)^{l-1}}{(l-1)!}dt \leqslant  \epsilon \frac{(x-a)^{l}}{l!}.
\end{eqnarray}
For all $ l \in \llbra 0 , k \rrbra$, we obtain that $\norm{ P_0^{k-l}- f^{(k-l)} }_{\mathcal{C}^0(I)} \leqslant \epsilon \frac{(b-a)^{l}}{l!}$ and then
$$
\norm{P_0 - f }_{\mathcal{C}^k(I)} \leqslant \epsilon\sum_{l=0}^k{ \frac{(b-a)^{l}}{l!}}.
$$
\noindent Proposition is proved.
\end{proof}
 Now we can finish	 the proof of the theorem. 
 Fix a relatively compact subdomain $D \Subset \Omega$. Since $u$ is continuous there exists a closed interval $I \supset u(\bar D)$.  
Let $(P_s)_{s \in \mathbb{N}}$ be a sequence of polynomials that converges to $f$ in $\mathcal{C}^k(I)$. Then $(P_s)_{s \in \mathbb{N}}$ is a Cauchy sequence : for every $\epsilon >0$ there exists $N \in \mathbb{N}$ such that for any $l,s >N$ we have $\norm{P_l - P_s}_{\mathcal{C}^k(I)}<\epsilon$. 
By Step 2 for any $s \in \mathbb{N}$ the function $P_s(u)$ is in $W^{k,2}(D)$.  Let us prove that $(P_s(u))_{s \in \mathbb{N}}$ is a Cauchy sequence in the Hilbert space $W^{k,2}(D)$. 
Note $P_{sl} = P_s - P_l$. 

For $\alpha$ with $|\alpha| \leqslant k $ and any polynomial $P$ we have 
\begin{equation} \label{Palpha}
D^\alpha P(u) = \sum_{r=1}^{|\alpha|} \left( \sum_{\underset{\alpha_1,\cdots,\alpha_r \neq 0}{\alpha_1 + ... + \alpha_r = \alpha}} D^{\alpha_1}u \cdots  D^{\alpha_r}u \right) P^{(r)}(u).
\end{equation} 

For $P(X)=X^p$ relation (\ref{Palpha}) becomes 
$$ \sum_{\underset{\alpha_1,\cdots,\alpha_p \neq 0}{\alpha_1 + ... + \alpha_p = \alpha}} D^{\alpha_1}u \cdots  D^{\alpha_p}u 
= \frac{1}{p!} \left( D^{\alpha} (u^p) - 
\sum_{r=1}^{p-1} \left( \sum_{\underset{\alpha_1,\cdots,\alpha_r \neq 0}{\alpha_1 + ... + \alpha_r = \alpha}} D^{\alpha_1}u \cdots  D^{\alpha_{r}}u \right) \frac{p!}{(p-r)!}u^{p-r} \right).$$

Let us prove by induction on $p \leqslant |\alpha|$ that  
$$  \sum_{\underset{\alpha_1,\cdots,\alpha_p \neq 0}{\alpha_1 + ... + \alpha_p= \alpha}} D^{\alpha_1}u \cdots  D^{\alpha_p}u \in L^2(D).$$ 

For $p=1$ it is true because $\frac{\partial^{|\alpha|}u}{\partial x_j^\alpha} \in L^2(D)$ for every $j=1, \dots, n$ since $u \in W^{k,2}(D)$. 
Let suppose that is true for all $s \leqslant p -1$. Then for $p \leqslant |\alpha|$ using the previous expression we obtain 
$$
 \norm{\sum_{\underset{\alpha_1,\cdots,\alpha_p \neq 0}{\alpha_1 + ... + \alpha_p = \alpha}} D^{\alpha_1}u \cdots  D^{\alpha_p}u 
}_{L^2} \leqslant \hspace{10cm} $$
$$ \leqslant 
 \frac{1}{p!}  \norm{D^{\alpha} (u^p)}_{L^2} +
\sum_{r=1}^{p-1}  \norm{ \frac{1}{(p-r)!}u^{p-r} \sum_{\underset{\alpha_1,\cdots,\alpha_r \neq 0}{\alpha_1 + ... + \alpha_r = \alpha}} D^{\alpha_1}u \cdots  D^{\alpha_{r}}u   }_{L^2}. $$

By Lemma \ref{product sobo} the map $u^p \in W^{k,2}(D)$, therefore $D^{\alpha} (u^p) \in L^2(D)$ for $|\alpha|\leqslant k$. 
For the second term since $u$ is continuous we have that $u$ is bounded on $\bar D$ by $M$. 
Then 
$$  \norm{ \frac{1}{(p-r)!}u^{p-r} \sum_{\underset{\alpha_1,\cdots,\alpha_r \neq 0}{\alpha_1 + ... + \alpha_r = \alpha}} D^{\alpha_1}u \cdots  D^{\alpha_{r}}u   }_{L^2} \leqslant  \frac{M^{p-r}}{(p-r)!}
\norm{ \sum_{\underset{\alpha_1,\cdots,\alpha_r \neq 0}{\alpha_1 + ... + \alpha_r = \alpha}} D^{\alpha_1}u \cdots  D^{\alpha_{r}}u   }_{L^2},$$
and by induction hypothesis $$ \norm{ \sum_{\underset{\alpha_1,\cdots,\alpha_r \neq 0}{\alpha_1 + ... + \alpha_r = \alpha}} D^{\alpha_1}u \cdots  D^{\alpha_{r}}u   }_{L^2} < \infty .$$

We obtain from (\ref{Palpha}) that
\begin{eqnarray*}
 \norm{ P_{s}(u)- P_{s}(u)}_{W^{k,2}}^2 &  = &  \norm{ P_{sl}(u)}_{W^{k,2}}^2 =\sum_{|\alpha | \leqslant k} \norm{D^\alpha P_{sl}(u)}_{L^2}^2 \leqslant \\ 
 &\leqslant & \sum_{|\alpha | \leqslant k} \sum_{r=1}^{|\alpha|} \norm{ \sum_{\alpha_1 + ... + \alpha_r = \alpha} D^{\alpha_1}u \cdots  D^{\alpha_r}u }_{L^2}^2 |P_{sl}^{(r)}(u)|^2  \\
  &\leqslant & \sum_{|\alpha | \leqslant k} \sum_{r=1}^{|\alpha|} \norm{ \sum_{\alpha_1 + ... + \alpha_r = \alpha} D^{\alpha_1}u \cdots  D^{\alpha_r}u }_{L^2}^2 \epsilon^2.
\end{eqnarray*}
Then we can conclude that $(P_s(u))_{s \in \mathbb{N}}$ is a Cauchy sequence on the Hilbert space $W^{k,2}(D)$. Therefore the sequence converge to $f(u) \in W^{k,2}(D)$. 
Theorem is proved. 
\qed

\smallskip

\smallskip

\begin{defi}Let $\Omega \subset \rr^n$ be a domain. 
The Sobolev space $W^{k,2}(\Omega, \rr^m)$ is defined as
$$ W^{k,2}(\Omega, \rr^m) = \{ u \in L^2(\Omega, \rr^m) \ | \ \forall \alpha \in \mathbb{N}^n, |\alpha|\leqslant k , D^\alpha u \in L^2(\Omega, \rr^m) \}. $$ 
\end{defi}
As in the one dimensional case one can prove an analogous of the Theorem \ref{Ck-Sobo}. Namely

\begin{theo}
\label{Ck-Sobo-dim} 
Let $k > \frac{n}{2}$, then for every $u \in W^{k,2}_{\text{loc}}(\Omega, \rr^m)$ and $f \in \mathcal{C}^k(\rr^m)$ the function $f \circ u$ is in $W^{k,2}_{\text{loc}}(\Omega)$.
\end{theo}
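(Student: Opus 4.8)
\textbf{Proof plan for Theorem~\ref{Ck-Sobo-dim}.}
The strategy is to reduce the vector-valued statement to the scalar case (Theorem~\ref{Ck-Sobo}) by the same three-step scheme, carefully tracking where the multivariable target $\rr^m$ changes things. First I would record the vector-valued analogue of Lemma~\ref{product sobo}: if $u,v\in W^{k,2}(\rr^n)$ then $uv\in W^{k,2}(\rr^n)$ with $\norm{uv}_{W^{k,2}}\leqslant C\norm{u}_{W^{k,2}}\norm{v}_{W^{k,2}}$ — this is unchanged, since the product here is of scalar components. Consequently, for any polynomial $P\in\rr[X_1,\dots,X_m]$ in $m$ variables and any $u=(u_1,\dots,u_m)\in W^{k,2}_{\text{loc}}(\Omega,\rr^m)$, the function $P(u)=P(u_1,\dots,u_m)$ lies in $W^{k,2}_{\text{loc}}(\Omega)$: each monomial $u_1^{p_1}\cdots u_m^{p_m}$ is a finite product of $W^{k,2}_{\text{loc}}$ functions by the iterated use of the product estimate, and sums stay in the class.

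Next I would establish the $\mathcal{C}^k$-approximation by polynomials on a compact box $I=\prod_{j=1}^m[a_j,b_j]\subset\rr^m$ containing $u(\bar D)$ for a fixed $D\Subset\Omega$. Here one replaces the one-variable Weierstrass argument by the classical multivariate Stone–Weierstrass theorem, which gives uniform approximation of a continuous function on the compact box $I$; to upgrade to $\mathcal{C}^k(I)$-approximation one approximates all partial derivatives $D^\beta f$ with $|\beta|\leqslant k$ simultaneously (for instance by convolving $f$ with a smooth mollifier to get a $\mathcal{C}^\infty$ function $\mathcal{C}^k$-close to $f$, then polynomially approximating that function together with its derivatives, using that a polynomial approximating $D^{(k,\dots,k)}g$ can be integrated back up coordinate by coordinate as in the one-dimensional Proposition). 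The upshot is: for every $\epsilon>0$ there is a polynomial $P\in\rr[X_1,\dots,X_m]$ with $\norm{f-P}_{\mathcal{C}^k(I)}<\epsilon$, where $\norm{g}_{\mathcal{C}^k(I)}=\sum_{|\beta|\leqslant k}\sup_I|D^\beta g|$.

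Finally I would run the Cauchy-sequence argument exactly as in Step~3 of the scalar proof. Fixing $D\Subset\Omega$ and a box $I\supset u(\bar D)$, pick polynomials $P_s\to f$ in $\mathcal{C}^k(I)$; by the above each $P_s(u)\in W^{k,2}(D)$. The multivariable Faà di Bruno formula expresses $D^\alpha(P(u))$, for $|\alpha|\leqslant k$, as a finite sum of terms of the shape $\big(D^{\alpha_1}u_{j_1}\cdots D^{\alpha_r}u_{j_r}\big)\,(D^\beta P)(u)$ with $\alpha_1+\dots+\alpha_r=\alpha$, $\alpha_i\neq 0$, $|\beta|=r\leqslant|\alpha|$ and indices $j_i\in\{1,\dots,m\}$; the products $D^{\alpha_1}u_{j_1}\cdots D^{\alpha_r}u_{j_r}$ are in $L^2(D)$ by the same induction on $r$ used in the scalar case (bounding the lower-order pieces using that each $u_j$ is bounded on $\bar D$ and that $u_j^p\in W^{k,2}(D)$). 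Applying this to $P_{sl}:=P_s-P_l$ gives
\[
\norm{P_s(u)-P_l(u)}_{W^{k,2}(D)}^2\;\leqslant\;\sum_{|\alpha|\leqslant k}\sum_{r=1}^{|\alpha|} C_{\alpha,r}\,\sup_{I}|D^\beta P_{sl}|^2\;\leqslant\;C\,\norm{P_s-P_l}_{\mathcal{C}^k(I)}^2,
\]
so $(P_s(u))_s$ is Cauchy in the Hilbert space $W^{k,2}(D)$ and converges there; since $P_s(u)\to f(u)$ pointwise (by $\mathcal{C}^0$-convergence and continuity of $u$), the limit is $f\circ u$, whence $f\circ u\in W^{k,2}(D)$. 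As $D$ was arbitrary, $f\circ u\in W^{k,2}_{\text{loc}}(\Omega)$. The main obstacle is the multivariate $\mathcal{C}^k$-approximation step: unlike in one dimension, one cannot simply ``integrate up'' a single polynomial approximating one top derivative, so one must either invoke a mollification argument or approximate a full compatible family of partial derivatives; once that is in place, the rest is a routine transcription of the scalar proof with the combinatorially heavier (but harmless) multivariable chain rule.
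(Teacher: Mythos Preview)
Your proposal is correct and follows essentially the same three-step scheme as the paper: the multivariable Fa\`a di Bruno expansion, the induction showing the mixed products $\sum_{\alpha_1+\cdots+\alpha_r=\alpha} D^{\alpha_1}u_{j_1}\cdots D^{\alpha_r}u_{j_r}\in L^2(D)$, and the Cauchy-sequence argument for $P_s(u)$ are all exactly what the paper does. The only noteworthy difference is that you explicitly flag and sketch the multivariate $\mathcal{C}^k$ polynomial approximation on a compact set, whereas the paper simply asserts the existence of such a sequence $(P_s)$ without comment.
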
 

\begin{proof}
The idea of the proof follows the one dimensional case. Fix a relatively compact subdomain $D \Subset \Omega$. 

For any polynomial $P(T_1, \dots , T_n)$ and $|\alpha| \leqslant k $ one has
\begin{eqnarray}
D^\alpha(P(u)) &=& \sum_{r=1}^{|\alpha|} \sum_{\underset{\alpha_1,\cdots,\alpha_r \neq 0}{\alpha_1 + ... + \alpha_r = \alpha}} (d^rP)_u \big( D^{\alpha_1}u, \dots ,D^{\alpha_r}u \big)  \\
&=&
\sum_{r=1}^{|\alpha|} \sum_{\underset{\alpha_1,\cdots,\alpha_r \neq 0}{\alpha_1 + ... + \alpha_r = \alpha}} \left( \sum_{i_1, \cdots, i_r=1}^m \frac{\partial^r P}{\partial t_{i_1} \cdots \partial t_{i_r}}(u) D^{\alpha_1}u_{i_1} \dots D^{\alpha_r}u_{i_r}  \right) \\
&=&
\sum_{r=1}^{|\alpha|}  \sum_{i_1, \cdots, i_r=1}^m \frac{\partial^r P}{\partial t_{i_1} \cdots \partial t_{i_r}}(u) \left( \sum_{\underset{\alpha_1,\cdots,\alpha_r \neq 0}{\alpha_1 + ... + \alpha_r = \alpha}} D^{\alpha_1}u_{i_1} \dots D^{\alpha_r}u_{i_r}  \right) \label{previous equation}
\end{eqnarray}

Fix $\alpha \in \mathbb{N}^n$ with $|\alpha| \leqslant k$. 
We prove by induction on $p$ with $1 \leqslant p \leqslant |\alpha| $ that
$$
\forall I=(j_1, \dots, j_p) \in \llbra 1, m \rrbra^p,   \ 
 \sum_{\underset{\alpha_1,\cdots,\alpha_{{p}} \neq 0}{\alpha_1 + ... + \alpha_{{p}} = \alpha}} D^{\alpha_1}u_{j_1} \dots D^{\alpha_p}u_{j_{p}}    \in L^2(D) .$$

For $p=1$ the property is true because $D^\alpha u_j \in L^2(D)$ for  $j=1, \dots, m$   since $D^\alpha u \in L^2(D, \rr^m)$.  
Let suppose that is true for all $s \leqslant p -1$. For $p\leqslant |\alpha| $ one can define
for every $J=(j_1, \dots, j_p) \in \llbra 1, m \rrbra^p$  $\beta = e_{j_1} + \dots + e_{j_p} $ with   $(e_j)_{i=1, ..., m}$ the standard basis in $\rr^m$. Then consider $P(T) =T^\beta= T_1^{\beta_1}\cdots T_m^{\beta_m}$ in relation (\ref{previous equation}) 
\begin{eqnarray*} 
D^\alpha(u^\beta) &=& \sum_{r=1}^{|\alpha|}  \sum_{i_1, \cdots, i_r=1}^m \frac{\partial^r P}{\partial t_{i_1} \cdots \partial t_{i_r}}(u) \left( \sum_{\underset{\alpha_1,\cdots,\alpha_r \neq 0}{\alpha_1 + ... + \alpha_r = \alpha}} D^{\alpha_1}u_{i_1} \dots D^{\alpha_r}u_{i_r}  \right) \\
&=& \sum_{r=1}^{p}  \sum_{i_1, \cdots, i_r=1}^m \frac{\partial^r T^\beta}{\partial t_{i_1} \cdots \partial t_{i_r}}(u) \left( \sum_{\underset{\alpha_1,\cdots,\alpha_r \neq 0}{\alpha_1 + ... + \alpha_r = \alpha}} D^{\alpha_1}u_{i_1} \dots D^{\alpha_r}u_{i_r}  \right) .
\end{eqnarray*}
Note that $\frac{\partial^p T^\beta}{\partial t_{j_1} \cdots \partial t_{j_p}} = \beta !$ and the number of indices $J$ such that $\beta = e_{j_1} + \dots + e_{j_p} $ is equal again to $\beta!$. The other term are equal to zero. 
Then we obtain
\begin{align*}
(\beta!)^2 & \sum_{\underset{\alpha_1,\cdots,\alpha_p \neq 0}{\alpha_1 + ... + \alpha_p = \alpha}} D^{\alpha_1}u_{j_1} \dots D^{\alpha_r}u_{j_p} =&\\
& = D^\alpha(u^\beta) -
 \sum_{r=1}^{p-1}  \sum_{i_1, \cdots, i_r=1}^m \frac{\partial^r T^\beta}{\partial t_{i_1} \cdots \partial t_{i_r}}(u)  \left( \sum_{\underset{\alpha_1,\cdots,\alpha_r \neq 0}{\alpha_1 + ... + \alpha_r = \alpha}} D^{\alpha_1}u_{i_1} \dots D^{\alpha_r}u_{i_r}  \right),
\end{align*}
where
$\beta! = \beta_1 ! \cdots \beta_m !$.

The term $ D^\alpha (u^\beta) = D^\alpha (u_1^{\beta_1} \dots u_m^{\beta_m})$ is in $L^2(D)$ since $u_1 , \dots u_m$ are in $W^{k,2}(D)$ and Lemma \ref{product sobo}. For the other term one can remark first that   $\frac{\partial^r X^\beta}{\partial x_{i_1} \cdots \partial x_{i_r}}(u)$ is a polynomial evaluate on $u$ and since $u$ is bounded on $D$ this polynomial is bounded by $M \in \rr$. Therefore for every $(i_1, \dots, i_r) \in \llbra 1,m \rrbra^r$ one has
$$
\norm{ \frac{\partial^r X^\beta}{\partial x_{i_1} \cdots \partial x_{i_r}}(u) 
 \left( \sum_{\underset{\alpha_1,\cdots,\alpha_r \neq 0}{\alpha_1 + ... + \alpha_r = \alpha}} D^{\alpha_1}u_{i_1} \dots D^{\alpha_r}u_{i_r}  \right) } \leqslant
 M \norm{ \sum_{\underset{\alpha_1,\cdots,\alpha_r \neq 0}{\alpha_1 + ... + \alpha_r = \alpha}} D^{\alpha_1}u_{i_1} \dots D^{\alpha_r}u_{i_r}}_{L^2(D)} 
$$
and by induction hypothesis this is in $L^2(D)$. Then for every $|\alpha| \leqslant k$ 
$$
\forall p \leqslant |\alpha|, \ 
\forall I=(j_1, \dots, j_p) \in \llbra 1, m \rrbra^p,   \ 
 \sum_{\underset{\alpha_1,\cdots,\alpha_{{p}} \neq 0}{\alpha_1 + ... + \alpha_{{p}} = \alpha}} D^{\alpha_1}u_{j_1} \dots D^{\alpha_p}u_{j_{p}}    \in L^2(D). 
$$
Now, since $u$ is continuous there exists a compact set $K \supset u(\bar D)$. 
Let $(P_s)_{s \in \mathbb{N}}$ be a sequence of polynomials that converges to $f$ in $\mathcal{C}^k(K)$. Then $(P_s)_{s \in \mathbb{N}}$ is a Cauchy sequence : for every $\epsilon >0$ there exists $N \in \mathbb{N}$ such that for any $l,s >N$ we have $\norm{P_l - P_s}_{\mathcal{C}^k(K)}<\epsilon$. 
By Lemma \ref{product sobo} for any $s \in \mathbb{N}$ the function $P_s(u)$ is in $W^{k,2}(D)$.  Let us prove that $(P_s(u))_{s \in \mathbb{N}}$ is a Cauchy sequence in the Hilbert space $W^{k,2}(D)$. 
Note $P_{sl} = P_s - P_l$.  

\begin{eqnarray}
\norm{D^\alpha(P_{sl}(u))}_{L^2(D)}^2
& \leqslant &
\sum_{r=1}^{|\alpha|}  \sum_{i_1, \cdots, i_r=1}^m \left| \frac{\partial^r P_{sl}}{\partial t_{i_1} \cdots \partial t_{i_r}}(u) \right|^2 \norm{ \sum_{\underset{\alpha_1,\cdots,\alpha_r \neq 0}{\alpha_1 + ... + \alpha_r = \alpha}} D^{\alpha_1}u_{i_1} \dots D^{\alpha_r}u_{i_r} }_{L^2(D)}^2\\
& \leqslant &
\sum_{r=1}^{|\alpha|}  \sum_{i_1, \cdots, i_r=1}^m \epsilon^2 \norm{ \sum_{\underset{\alpha_1,\cdots,\alpha_r \neq 0}{\alpha_1 + ... + \alpha_r = \alpha}} D^{\alpha_1}u_{i_1} \dots D^{\alpha_r}u_{i_r} }_{L^2(D)}^2 \\
& \leqslant & \epsilon^2
\sum_{r=1}^{|\alpha|}  \sum_{i_1, \cdots, i_r=1}^m \norm{ \sum_{\underset{\alpha_1,\cdots,\alpha_r \neq 0}{\alpha_1 + ... + \alpha_r = \alpha}} D^{\alpha_1}u_{i_1} \dots D^{\alpha_r}u_{i_r} }_{L^2(D)}^2 
\end{eqnarray}
Therefore $(P_s(u))_s$ is a Cauchy sequence in the Hilbert space $W^{k,2}(D)$ so the sequence converge to $f(u) \in W^{k,2}(D)$. 
Theorem is proved.  
\end{proof}

From that one can define Sobolev spaces of maps between manifolds.
\begin{defi}
Let $S$ and $X$ be real manifolds of class $\mathcal{C}^k$. A map $u: S \to X$ is said to be in $W^{k,2}(S,X)$ 
if for every $s \in S$, every coordinate chart $(V, \psi)$ which contains $s$ and every  coordinate 
chart $(U, \phi)$ which contains $u(s)$ one has that $\phi \circ u \circ \psi^{-1} \in W^{k,2}_{loc}$.
\end{defi}

\noindent Theorem \ref{Ck-Sobo-dim} insures that this definition is correct. The space of $W^{k,2}$-maps from $S$ to $X$ we denote as $W^{k,2}(S,X)$. Notice that $W^{k,2}(S,X)$ inherits the natural topology from $W^{k,2}_{loc}$. We shall call this topology the Sobolev topology.

\section{Complex structure on the space of Sobolev maps}
From now on let $S$ be a compact connected n-dimensional real manifold with boundary. Let $X$ be a finite dimensional complex manifold. Our goal in this section to equip the Sobolev space $W^{k,2}(S,X)$ with a natural structure of a 
complex Hilbert manifold.  

For a set $U \subset S \times X$ and $s\in S$ we write
$$U^s:=\{ x \in X \ | \ (s,x) \in U \} \hspace{1cm} \text{and} \hspace{1cm}
\begin{array}{ccccc}
\epsilon^s & : & U^s & \to & U \\
 & & x & \mapsto & (s,x). \\
\end{array}  $$

\begin{lem}
\label{chart-hilbert-manifold}
Given $g \in W^{k,2}(S,X)$ with $k > \frac{n}{2} $, there exists a $W^{k,2}$-diffeomorphism $G$ between a neighborhood $U \subset S \times X$ of the graph $\{ (s,g(s)) \ | s \in S \}$ of $g$ and a neighborhood of the zero section in $g^*TX$ such that
\begin{itemize}

\item[i)] $\{ G(s,g(s)) \ | \ s \in S \}$ is the zero section of $g^*TX$;

\item[ii)] $G^s=G \circ \epsilon^s$ maps $U^s$ biholomorphically on a neighborhood of $0 \in T_{g(s)}X$;

\item[iii)] $dG^s_{g(s)}$ is the identity map.

\end{itemize}
\end{lem}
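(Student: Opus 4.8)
The plan is to construct $G$ fibrewise over $S$. First I would build, for each point $p\in X$, a biholomorphism $G_p$ from a neighbourhood of $p$ in $X$ onto a neighbourhood of $0$ in $T_pX$ satisfying $G_p(p)=0$ and $dG_p|_p=\mathrm{id}_{T_pX}$, and depending smoothly on $p$; then I would set $G(s,x):=(s,G_{g(s)}(x))$ on a suitable neighbourhood $U$ of the graph of $g$. Conditions (i)--(iii) will then be immediate translations of the normalisations of $G_p$, so the real content is concentrated in (a) making the construction of $\{G_p\}$ smooth in $p$ while keeping the normalisations, and (b) checking that the resulting $G$ is a $W^{k,2}$-diffeomorphism.

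For (a), I would fix a locally finite cover $\{W_j\}$ of $X$ by domains of holomorphic charts $\phi_j\colon W_j\to\cc^N$, $N=\dim_\cc X$, together with a subordinate smooth partition of unity $\{\chi_j\}$, $\operatorname{supp}\chi_j\subset W_j$. For $p\in W_j$ and $q$ near $p$ put $h^j_p(q):=(d\phi_j|_p)^{-1}\big(\phi_j(q)-\phi_j(p)\big)\in T_pX$; this is holomorphic in $q$, vanishes at $q=p$, and has differential $\mathrm{id}_{T_pX}$ at $p$. Then define $G_p(q):=\sum_j\chi_j(p)\,h^j_p(q)$, a finite sum over those $j$ with $p\in\operatorname{supp}\chi_j$. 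Since $\sum_j\chi_j\equiv1$ one gets $G_p(p)=0$ and $dG_p|_p=\sum_j\chi_j(p)\,\mathrm{id}=\mathrm{id}_{T_pX}$, so by the holomorphic inverse function theorem $q\mapsto G_p(q)$ is a biholomorphism from some neighbourhood of $p$ onto a neighbourhood of $0\in T_pX$, and all ingredients ($\phi_j$, $\chi_j$, $p\mapsto(d\phi_j|_p)^{-1}$) depend smoothly on $p$. Because $k>n/2$, the map $g$ is continuous, so $g(S)$ is compact; fixing an auxiliary Riemannian distance on $X$ and applying a quantitative inverse function theorem uniformly over $g(S)$ yields $\rho>0$ such that $G_p|_{B(p,\rho)}$ is a biholomorphism onto an open neighbourhood of $0\in T_pX$ for every $p\in g(S)$. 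I would then set $U:=\{(s,x)\in S\times X:\operatorname{dist}(x,g(s))<\rho\}$, which is open (continuity of $g$) and contains the graph of $g$, with $U^s=B(g(s),\rho)$, and define $G(s,x):=(s,G_{g(s)}(x))$. One checks directly that $G(s,g(s))=(s,0)$, giving (i); that $G^s=G\circ\epsilon^s=G_{g(s)}|_{U^s}$ is a biholomorphism onto a neighbourhood of $0\in T_{g(s)}X$, giving (ii); and that $dG^s_{g(s)}=\mathrm{id}$, giving (iii). By construction $G$ maps $U$ bijectively onto the open neighbourhood $\{(s,v):v\in G_{g(s)}(U^s)\}$ of the zero section of $g^*TX$ (it contains the uniform neighbourhood $\{|v|<\delta\}$ coming from the uniformity of the quantitative inverse function theorem).

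For (b), I would work in local coordinates: a chart $\psi\colon V\to\rr^n$ on $S$ small enough that $g(V)$ lies in a single $W_j$ and that $g^*TX$ is trivialised over $V$. In such coordinates $G$ reads $(s,x)\mapsto(s,\Phi(s,g(s),x))$ with $\Phi$ smooth, assembled from the smooth data above, and similarly $G^{-1}$ reads $(s,v)\mapsto(s,\Psi(s,g(s),v))$, with $\Psi$ smooth because the inverse of a smooth family of biholomorphisms depends smoothly on the parameters. The coordinate functions of $(s,x)\mapsto(s,g(s),x)$ are either smooth (the $s$- and $x$-coordinates) or the $W^{k,2}_{\mathrm{loc}}$-functions $s\mapsto g(s)$ (pulled back trivially in $x$), so this map is $W^{k,2}_{\mathrm{loc}}$; composing with the smooth $\Phi$, resp.\ $\Psi$, and invoking Theorem~\ref{Ck-Sobo-dim} shows that $G$ and $G^{-1}$ are $W^{k,2}_{\mathrm{loc}}$, hence $G$ is a $W^{k,2}$-diffeomorphism.

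I expect the main obstacle to be step (b): not because any single estimate is hard, but because one must set up compatible coordinate charts on $S\times X$ and local trivialisations of the pullback bundle $g^*TX$ (whose transition functions are themselves $W^{k,2}$, being smooth functions composed with $g$), and then reduce the regularity of $G$ and $G^{-1}$ to the composition theorem, Theorem~\ref{Ck-Sobo-dim}. The construction in (a) is short once one notices the partition-of-unity averaging trick, which preserves holomorphy in $q$ and, because the weights $\chi_j(p)$ sum to $1$, preserves exactly the normalisations $G_p(p)=0$ and $dG_p|_p=\mathrm{id}$ that yield (i) and (iii).
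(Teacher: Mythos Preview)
Your construction is essentially the paper's: define, for each chart $\phi_j$, the local map $h^j_p(q)=(d\phi_j|_p)^{-1}\big(\phi_j(q)-\phi_j(p)\big)$, average with a partition of unity to get $G_p$, and set $G(s,x)=(s,G_{g(s)}(x))$. The only cosmetic difference is that the paper places its partition of unity on $S$ (subordinate to $\{g^{-1}(\Omega_j)\}$) while you place it on $X$ and then pull back by $g$; either works, and your choice has the mild advantage that $\{G_p\}$ is built once on $X$ before any reference to $g$.

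There is, however, one genuine slip in step (b). Theorem~\ref{Ck-Sobo-dim} as stated needs $k>\tfrac12\dim\Omega$, and you are applying it on a chart of $S\times X$, whose real dimension is $n+2\dim_{\cc}X$; the hypothesis $k>n/2$ does not feed in directly. The conclusion is nevertheless true, because your map has the special structure $(s,x)\mapsto\Phi(s,g(s),x)$ with $\Phi$ smooth: in local coordinates $G$ is a finite sum of terms $a(g(s))\,b(x)$ with $a,b$ smooth, so $a\circ g\in W^{k,2}_{\mathrm{loc}}(S)$ by Theorem~\ref{Ck-Sobo-dim} applied on $S$ (where $k>n/2$ is exactly the hypothesis), and the tensor product of a $W^{k,2}$-function of $s$ with a smooth function of $x$ is $W^{k,2}_{\mathrm{loc}}$ in $(s,x)$ by Fubini. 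Equivalently, expand $D_s^\alpha D_x^\beta\Phi(s,g(s),x)$ by the chain rule: the only non-smooth factors are products $D_s^{\alpha_1}g_{j_1}\cdots D_s^{\alpha_r}g_{j_r}$ with $\sum|\alpha_i|\le k$, and these lie in $L^2_{\mathrm{loc}}(S)$ by the very computation inside the proof of Theorem~\ref{Ck-Sobo-dim}. The paper itself glosses over this point (``$G_j$ is of class $W^{k,2}$ because such is $g$''), so your write-up is already more careful; just replace the bare citation of Theorem~\ref{Ck-Sobo-dim} on the product domain by one of the two arguments above.
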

\begin{proof}
i)
We recall the argument from \cite{L} pointing out the smoothness of $G$. Let $(\Omega_j, \phi_j)$ be an atlas of the complex manifold $X$. Then  the sets $S_j = g^{-1}(\Omega_j) \subset S$ form an open covering of $S$. Consider $U_j \subset S_j \times \Omega_j $ a neighborhood of the graph of $g|_{S_j}$. We can construct locally the diffeomorphism $G_j$ by 
$$
\begin{array}{cccc}
G_j : & U_j & \longrightarrow & g^*TX \\
& (s,x) & \mapsto & \Big( s, (d\phi_j^{-1})_{\phi_j(g(s))} \big[\phi_j(x) - \phi_j(g(s))\big]  \Big) .
\end{array} $$
Notice that $G_j$ is of class $W^{k,2}$ because such is $g$. 
It remains to glue all the $G_j$. Take $\{ \eta_j \}$ a $\mathcal{C}^k$-partition of unity subordinated to the covering $\{ S_j \}$ and define
 $G(s,x)= \sum_{j}\eta_j(s) G_j(s, x)$. Choose then the restriction of $G$ to a suitable $U \subset \bigcup_{j}U_j$. Remark that $G(s,g(s)) =(s,0)$. 

ii) We have that $G_j^s(x)= G_j \circ \epsilon^s(x)= (d\phi_j^{-1})_{\phi_j(g(s))} \big[\phi_j(x) - \phi_j(g(s))\big] $.  The map $G_j^s: U_g^s \to g^*TX$ is holomorphic for every $s \in S_j$ since $x \mapsto (d\phi_j^{-1})_{\phi_j(g(s))} \big[\phi_j(x) - \phi_j(g(s))\big]$ is holomorphic. Here $U_g^s= \{x \in X \ | \ (s,x) \in U_g \}$. Holomorpicity of $G^s(x)= \sum_j \eta_j(s) G^s_j(x)$ follows. 

iii) We compute the differential of $G^s$ and obtain
$$ dG^s_{g(s)}=  (d\phi_j^{-1})_{\phi_j(g(s))} (d\phi_j)_{g(s)} = \text{Id}$$
\end{proof}

For $g \in W^{k,2}(S,X)$ choose $U_g$ and $G$ as in the previous lemma. Those $h \in W^{k,2}(S,X)$ whose graph $\Gamma_h:=\{(s,h(s)) \ | \ s\in S \}$ 
is contained in $U_g$ form a neighborhood $\mathcal{U}_g \subset W^{k,2}(S,X)$ of $g$:
$$\mathcal{U}_g=\left\lbrace h \in W^{k,2}(S,X) \ | \ \Gamma_h \subset U_g \right\rbrace . $$

For $h \in \calu_g$ define the section $\psi_g(h)=G(\cdot,h(\cdot)) \in W^{k,2}(S,g^*TX)$. Thus $\psi_{g}$ is a homeomorphism between $\calu_g$ and an open neighborhood of zero section in $W^{k,2}(S,g^*TX)$. We can define the chart $(\calu_g, \psi_g)$ where local coordinates are in a complex Hilbert space $W^{k,2}(S,g^*TX)$.
Now we need to verify that transition maps are holomorphic.

Let $h\in W^{k,2}(S,X)$ be such that $\Gamma_h \subset U_g \cap U_{g'}$, i.e. $h \in \calu_g \cap \calu_{g'}$. For $s\in S$ we have 
$$ \psi_{g'}(h)(s)=\left(\psi_{g'} \circ \psi_g^{-1} \right)[\psi_g(h)(s)]= \left[{G'}^s \circ {G^s}^{-1} \right][\psi_g(h)(s)] = \left[ G'(s,\cdot) \circ G(s, \cdot)^{-1} \right][\psi_g(h)(s)]$$
Due to item $ii)$ of the Lemma just proved we have that ${G'}^s \circ {G^s}^{-1} $ is a biholomorphism between an appropriate open subsets of $T_{g(s)}X$ and $T_{g'(s)}X$. Therefore the value $\psi_{g'}(h)(s)$ depends holomorphically on $\psi_g(h)(s)$. 

In more details we have a $W^{k,2}$-regular maps $P=G' \circ G^{-1}$ between open sets $V \subset g^*TX$ and $V' \subset {g'}^*TX$ such that for every $s \in S$ $P(s,v)$ holomorphically depends on $v \in V \cap T_{g(s)}X$ where $P(s,v)=(s,P^s(v))$. Let $\calv$ be the open set of sections of $g^*TX$ which are contained in $V$. The same for $\calv'$. We obtain a mapping $\mathcal{P}: \calv \to \calv'$ defined as $\mathcal{P}(h)(s)= P(s,P^s(h(s)))$, where $h$ is a $W^{k,2}$-section of $g^*TX$ contained in $V$. Then $\mathcal{P}$ is holomorphic. This easily follows from Gâteaux differentiability and continuity of $\mathcal{P}$. 
Thus $W^{k,2}(S,X)$ has the structure of a complex Hilbert manifold. 

\begin{lem}
\label{lemp}
Let $D$ and $X$ be finite dimensional complex manifolds and let $S$ be
an $n$-dimensional compact real manifold with boundary. A mapping $F:D\times S\to X$
represents a holomorphic map $F_*:D \to W^{k,2} (S,X)$ if and only if the following holds:
\begin{itemize}

\item[i)] for every $s\in S$ the map $F(\cdot,s):D\to X$ is holomorphic;

\item[ii)] for every $z\in D$ one has $F(z,\cdot)\in W^{k,2} (S,X)$ and the correspondence \\
$D\ni z\mapsto F(z,\cdot)\in W^{k,2} (S,X)$ is continuous
with respect to the Sobolev topology on $W^{k,2}(S,X)$ and the standard
topology on $D$.

\end{itemize}

\end{lem}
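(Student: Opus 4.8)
The plan is to prove the two directions separately, and in each case to reduce the statement to the local charts $(\calu_g, \psi_g)$ constructed above, where the target is an open set in the Hilbert space $W^{k,2}(S, g^*TX)$, so that "holomorphic" acquires its usual meaning (continuous plus Gâteaux-holomorphic, or equivalently locally bounded plus weakly holomorphic along each complex line). The necessity direction is the routine one: if $F_*: D \to W^{k,2}(S,X)$ is holomorphic, then for each fixed $s$ the evaluation map $\mathrm{ev}_s: W^{k,2}(S,X) \to X$, $h \mapsto h(s)$, is holomorphic — this follows because in a chart $\psi_g$ it is the composition of the continuous linear (hence holomorphic) evaluation $W^{k,2}(S,g^*TX) \to T_{g(s)}X$ with the biholomorphism $(G^s)^{-1}$ of item ii) of Lemma \ref{chart-hilbert-manifold}; composing with $F_*$ gives i). Continuity in ii) is immediate since $F_* $ is in particular continuous, and $F(z,\cdot) = F_*(z) \in W^{k,2}(S,X)$ by definition of the target.

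The substance is the sufficiency direction. Assume i) and ii). Fix $z_0 \in D$; I want to show $F_*$ is holomorphic near $z_0$. Set $g := F(z_0,\cdot) \in W^{k,2}(S,X)$ and pick the chart $(\calu_g,\psi_g)$ around $g$. By continuity (ii)) there is a neighborhood $D_0 \ni z_0$ with $F(z,\cdot) \in \calu_g$ for $z \in D_0$, so on $D_0$ the map $F_*$ is represented by $\Phi(z) := \psi_g(F(z,\cdot)) \in W^{k,2}(S,g^*TX)$, i.e. $\Phi(z)(s) = G(s, F(z,s))$. Since $z \mapsto F(z,\cdot)$ is continuous into the Sobolev topology and $\psi_g$ is a homeomorphism, $\Phi: D_0 \to W^{k,2}(S,g^*TX)$ is continuous, hence locally bounded. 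It remains to check that $\Phi$ is Gâteaux-holomorphic, i.e. that for each $z \in D_0$, each $w \in \cc^{\dim D}$ (in a chart on $D$) and each continuous linear functional $\lambda$ on the Hilbert space, the scalar function $\zeta \mapsto \lambda(\Phi(z+\zeta w))$ is holomorphic in a disc around $0$. For this I would use that by i) and the chart formula, for every fixed $s$ the map $z \mapsto \Phi(z)(s) = G(s, F(z,s)) = G^s(F(z,s))$ is holomorphic into $T_{g(s)}X$, because $F(\cdot,s)$ is holomorphic and $G^s$ is holomorphic (item ii) of Lemma \ref{chart-hilbert-manifold}); thus all "pointwise" slices are holomorphic. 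Combining pointwise holomorphy with local boundedness of $\Phi$ in the Hilbert norm, Cauchy's integral formula applied slicewise — $\Phi(z)(s) = \frac{1}{2\pi i}\oint \Phi(z+\zeta w)(s)\,\frac{d\zeta}{\zeta - \zeta_0}$ for each $s$ — together with the fact that the $W^{k,2}$-norm controls (and is controlled by) the $L^2$-norms of the derivatives $D^\alpha$, lets one pass the contour integral inside the norm (Minkowski's integral inequality, as in Lemma \ref{product sobo}) to conclude that $\zeta \mapsto \Phi(z+\zeta w)$ is given by a norm-convergent power series, hence holomorphic as a Hilbert-space-valued map. Continuity plus Gâteaux-holomorphy then gives holomorphy of $\Phi$, and therefore of $F_*$, near $z_0$; since $z_0$ was arbitrary, $F_*$ is holomorphic on $D$.

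The step I expect to be the main obstacle is precisely the passage from "slicewise (pointwise in $s$) holomorphy plus local boundedness" to "Hilbert-space-valued holomorphy" — one must verify that the Sobolev norm of $\Phi(z+\zeta w)$, as a function of $\zeta$, is not merely bounded but that the difference quotients converge in $W^{k,2}(S,S^*TX)$-norm, not just pointwise. The clean way is to show $\Phi$ is locally bounded and weakly holomorphic (i.e. $\lambda \circ \Phi$ holomorphic for all $\lambda$ in a separating set of functionals, e.g. the evaluations $h \mapsto \langle h(s), v\rangle$ against a dense set of $(s,v)$, which separate points of $W^{k,2}(S,g^*TX)$ since Sobolev maps with $k > n/2$ are continuous), and then invoke the standard fact that a locally bounded, weakly holomorphic map between (open subsets of) Banach spaces is holomorphic. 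I would make sure the separating family of functionals is genuinely total on $W^{k,2}(S,g^*TX)$ — this uses the Sobolev imbedding $k > n/2$ recalled earlier, which guarantees point evaluations are defined and continuous — and that local boundedness follows from continuity of $z \mapsto F(z,\cdot)$ in the Sobolev topology together with compactness of $S$.
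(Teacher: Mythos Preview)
Your approach is essentially the same as the paper's: both directions are reduced to the charts $(\calu_g,\psi_g)$, the forward implication goes via holomorphy of $G^s$, and the converse combines the continuity from (ii) with the pointwise-in-$s$ holomorphy $z\mapsto G^s(F(z,s))$ coming from (i). The paper's proof is in fact terser than yours --- it simply asserts ``Then $\psi_g\circ F_*$ is holomorphic'' at exactly the place you identify as the main obstacle --- so your write-up supplies the justification the paper omits.

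One small caution on the ``clean way'' you propose at the end: the statement ``locally bounded $+$ weakly holomorphic with respect to a merely \emph{separating} family of functionals $\Rightarrow$ holomorphic'' is not a standard fact in that generality (Dunford's theorem uses all of $E^*$, and for proper separating subsets one generally needs a norming condition). What saves you here is that you already have \emph{continuity} of $\Phi$, not just local boundedness: with continuity the Bochner integral $\oint_\gamma \Phi(\zeta)\,d\zeta$ exists, the separating point-evaluations show it vanishes, and Morera's theorem for Banach-valued maps finishes the job. Equivalently, your first route via the slicewise Cauchy integral and Minkowski's inequality works as written, again because continuity makes the contour integral a genuine Bochner integral in $W^{k,2}(S,g^*TX)$. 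So the argument is fine, but phrase the invoked principle as ``continuous $+$ separating-weakly-holomorphic $\Rightarrow$ holomorphic (Morera)'' rather than ``locally bounded $+$ \dots''.
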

\begin{proof} $\Rightarrow$
Given $F_*:D \to W^{k,2}(S,X)$ we construct $F: D \times S \to X$ as follows. For $s \in S$ and $z \in D$ we define $F(z,s)=F_*(z)(s)$. 
By the assumtion about $F_*$ for any $z \in D$ the map $F(z,\cdot)$ is in $W^{k,2}(S,X)$ and the map $z \mapsto F(z,\cdot)=F_*(z)$ is continuous (in fact it is holomorphic).
 To prove the holomorphicity of $F(z,s)$ for a fixed take any $s\in S$ in a neighborhood of some $z_0$ take any chart $(\calu, \psi_g)$ which conains the graph of $F_*(z_0)$. We have 
$\psi_g(F(z,s))(s) = G\big(s,F_*(z)(s)\big)=G(s,F(z,s))$ for $z$ close to $z_0$ by definition of $\psi_g$. This
is holomorphic by the definition of the complex structure of $W^{k,2}(S,X)$.
 Therefore $F: D \times S \to X$ is holomorphic in $z$. 

Conversely given $F: D \times S \to X$ satisfying $i)$ and $ii)$ we can construct $F_*$ as follows. For $z \in D$ we define $F_*(z)=F(z,\cdot)$. Mapping $F_*$ is well defined since $F(z,\cdot) \in W^{k,2}(S,X)$ and is continuous by $ii)$. For the holomorphicity again we take a chart $(\calu, \psi_g)$ and we consider the map $\psi_g \circ F_*$ defined for $z\in D$ by $\psi_g \circ F_*(z) = G\big(\cdot, F(z, \cdot)(\cdot)\big)$. By $i)$ for every $s \in S$ the map $s \mapsto G\big(s, F(z, s)(s)\big)$ is holomorphic by composition of holomorphic maps $F(\cdot, s)$ and $G^s$. Then $ \psi_g \circ F_*$ is holomorphic. 
\end{proof}

\noindent \textbf{Remark.}
If $X$ is a smooth real manifold one can repeat the same contruction to ensure that $W^{k,2}(S,X)$ has a structure of a smooth Hilbert manifold. In fact the holomorphy of item $ii)$ in lemma \ref{chart-hilbert-manifold} should be replaced by smoothness since coordinate charts $\psi_g$ are  smooth.

\section{A Hartogs-type theorem}

In \cite{I} it was proved the following result. 
\begin{theo}
\label{qn-pm} If a complex manifold $X$ is $q$-Hartogs then for any $(p,n)$ with $p\ge q, n\ge 1$  the map $f: H_p^n(r) \to X$ extends holomorphically to $\Delta^{q+n}$. 
\end{theo}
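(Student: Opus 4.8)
The plan is to reduce the whole statement to the case $p=q$, which is precisely the extension theorem recalled in the introduction: if $X$ is $q$-Hartogs then every holomorphic map $H_q^m(r)\to X$ extends to a holomorphic map $\Delta^{q+m}\to X$ (proved in \cite{A-Z}, and for finite dimensional $X$ in \cite{I}). The only new ingredient I would need is the elementary observation that, for $p\ge q$ and $n\ge 1$, after a permutation of the coordinates of $\cc^{p+n}$ the $q$-concave Hartogs figure $H_q^{p+n-q}(r)$ is an open subset of $H_p^n(r)$.

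To justify this inclusion I would write the coordinates of $\cc^{p+n}$ as $(z_1,\dots,z_p,w_1,\dots,w_n)$, put $m\deff p+n-q\ge 1$, and identify the $\Delta^q$-factor of $H_q^m(r)$ with $(z_1,\dots,z_q)$ and its $\Delta^m$-factor with $(z_{q+1},\dots,z_p,w_1,\dots,w_n)$. Then a point of $\Delta^q\times\Delta^m(r)$ satisfies $|z_j|<1$ for $j\le q$, $|z_j|<r$ for $q<j\le p$ and $|w_i|<r$ for all $i$, hence lies in $\Delta^p\times\Delta^n(r)\subset H_p^n(r)$; and a point of $A^q_{1-r,1}\times\Delta^m$ satisfies $|z_j|<1$, $|w_i|<1$ and $\max_{j\le q}|z_j|>1-r$, hence $\max_{j\le p}|z_j|>1-r$ and it lies in $A^p_{1-r,1}\times\Delta^n\subset H_p^n(r)$. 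This proves $H_q^m(r)\subset H_p^n(r)$, an inclusion of open sets. I would also record two trivial facts: $H_p^n(r)\subset\Delta^{p+n}$, and $H_p^n(r)$ is connected (its two defining pieces are connected and meet along the nonempty set $A^p_{1-r,1}\times\Delta^n(r)$).

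Granting this, the proof is short. Given a holomorphic $f:H_p^n(r)\to X$ with $X$ $q$-Hartogs, I would restrict $f$ to $H_q^m(r)$ and apply the extension theorem of \cite{A-Z}, \cite{I} to $f|_{H_q^m(r)}$: it extends to a holomorphic map $\tilde f:\Delta^{q+m}=\Delta^{p+n}\to X$ (the polydisc $\Delta^{p+n}$ being the envelope of holomorphy of $H_p^n(r)$). Now $\tilde f|_{H_p^n(r)}$ and $f$ are two holomorphic maps on the connected complex manifold $H_p^n(r)$ that agree on the nonempty open subset $H_q^m(r)$, so by the identity principle for holomorphic maps into a complex manifold they coincide on $H_p^n(r)$. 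Hence $\tilde f$ is the required holomorphic extension of $f$.

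I do not expect any real difficulty: the entire analytic content is carried by the already established case $p=q$, and what is added here is only the coordinate bookkeeping for the inclusion $H_q^{p+n-q}(r)\subset H_p^n(r)$ together with a standard use of the identity principle. The one step I would be careful to state explicitly is the connectedness of $H_p^n(r)$, since that is exactly what makes the identity principle propagate the equality $\tilde f=f$ from the sub-figure to all of $H_p^n(r)$.
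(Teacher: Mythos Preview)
Your argument is correct. The inclusion $H_q^{p+n-q}(r)\subset H_p^n(r)$ that you check is right, and once the case $p=q$ (arbitrary $n$) is granted---which is exactly the statement the paper recalls in the Introduction as proved in \cite{I} and \cite{A-Z}---the identity principle on the connected domain $H_p^n(r)$ finishes the job as you describe. One small remark: the target polydisc in the statement should obviously be $\Delta^{p+n}$ rather than $\Delta^{q+n}$, and you have silently corrected this.

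As for comparison with the paper: there is nothing to compare. The paper does not give a proof of Theorem~\ref{qn-pm}; it simply attributes the result to \cite{I} (finite-dimensional case) and to \cite{A-Z} (Hilbert case), and then moves on to the tools from \cite{A-Z} that are needed for Theorem~\ref{env-hol}. So your reduction is not an alternative to the paper's argument but rather a self-contained way to deduce the $p\ge q$ statement from the $p=q$ statement already quoted in the Introduction. The original proofs in \cite{I} and \cite{A-Z} work harder, since they must establish the $p=q$, $n\ge 1$ case from the defining $p=q$, $n=1$ case; your contribution is the observation that the further passage from $p=q$ to $p\ge q$ is essentially a relabelling of coordinates.
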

For Hilbert $\calx$ it was proved in \cite{A-Z}. 
The proof of this result for Hilbert $\calx$ lies on the following two statements proved in \cite{A-Z}, and we shall need them here too. Recall first the definition of a $1$-complete neighborhood. 

\begin{defi}\label{1compdef}
A $1$-complete neighborhood of a compact $\calk \subset \calx$ is an open set $\calu \supset \calk$ such that
\begin{itemize}
\item[i)] $\calu$ is contained in a finite union of open coordinate balls centered at points of $\calk$, i.e.
$$\calu \subset \bigcup_{\alpha=1}^{n}{}\calb_\alpha  \ \text{with} \ \calb_\alpha = \phi_\alpha^{-1}(B^\infty) \ \text{and} \ \phi_\alpha^{-1}(0)= k_\alpha  \in \calk,$$

\item[ii)] $\calu$ possesses a strictly plurisubharmonic exhaustion function $\psi: \calu \to [0,t_0)$, i.e.
\begin{itemize}
 \item for every $t<t_0$ one has that
  $\overline{\psi^{-1}\left([0,t)\right)}\subset \calu$.
\end{itemize}
\end{itemize}
\end{defi}

Here by a strictly plurisubhamonic function we mean the following.
\begin{defi}
Let $U$ be an open subset of $\calx$. A function $f \in \mathcal{C}^2(U,\rr)$ is said to be strictly plurisubharmonic on $U$ if the Levi form $\mathcal{L}_{f,a}$ is positive definite for every $a \in U$, i.e
\[ \mathcal{L}_{f,a}(v) > 0 \hspace{3mm} \text{for each} \hspace{3mm} a\in U \hspace{3mm} \text{and}\hspace{3mm} v \in T_a\calx\backslash \{0\}. \]
\end{defi}

In \cite{A-Z} we give a strongest definition of a strictly plurisubharmonic function, see definition 2.1 there. In fact we states that the Levi form staisfies 
\[
{\mathcal{L}_{f,a}(v)} 
\geqslant c(a)||v||^2 \quad \text{for each} \hspace{3mm} a\in U \hspace{3mm} \text{and}\hspace{3mm} v \in T_a\calx\backslash \{0\},
\]
 with $c$ a positive function in $\mathcal{C}^0(U,\rr)$. These definition are not equivalent as it shows the counter-example given by Lempert:
 
\noindent Take for example the function $f$ defined by $f(z) = \sum_{j=1}^\infty{\frac{|z_j|^2}{j}}$ for $z \in l^2$. The Levi form on a point $a \in l^2$ is
$$\forall v  \in l^2, \hspace{3mm} \mathcal{L}_{f,a}(v) = \sum_{j=1}^\infty\frac{1}{j}|v_j|^2 ,$$   
and this cannot be bound from below by $c||v||^2$. Nevertheless plurisubharmonic  functions we need in this paper satisfies this strongest definition.   
 
\begin{theo}
\label{royden-w}
Let $\phi : \bar D \to \calx $ be an imbedded analytic $q$-disk in a complex Hilbert manifold $\calx$. 
Then $\phi (\bar D) $ has a fundamental system of $1$-complete neighborhoods.
\end{theo}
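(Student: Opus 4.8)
The plan is to reduce the infinite-dimensional statement to the finite-dimensional case by a local argument near the compact set $\phi(\bar D)$. Since $\phi$ is an imbedded analytic $q$-disk, cover $\phi(\bar D)$ by finitely many coordinate charts $\calb_\alpha = \phi_\alpha^{-1}(B^\infty)$, $\alpha = 1, \dots, N$, centered at points $k_\alpha = \phi_\alpha(0) \in \phi(\bar D)$, where $B^\infty$ denotes the open unit ball of the model Hilbert space $\mathbb H$. First I would use the imbeddedness and compactness of $\phi(\bar D)$ to arrange that, after shrinking, the transition maps $\phi_\alpha \circ \phi_\beta^{-1}$ are defined and biholomorphic on a fixed neighborhood, and that $\phi(\bar D)$ stays uniformly away from the boundary spheres $\phi_\alpha^{-1}(\partial B^\infty)$. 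The goal, following Definition~\ref{1compdef}, is to produce for each $\varepsilon > 0$ an open set $\calu_\varepsilon$ with $\phi(\bar D) \subset \calu_\varepsilon \subset \bigcup_\alpha \calb_\alpha$ carrying a strictly plurisubharmonic exhaustion function, and such that $\calu_\varepsilon$ shrinks to $\phi(\bar D)$ as $\varepsilon \to 0$.

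The construction of the exhaustion function is the heart of the matter. In each chart, the set $\phi_\alpha(\phi(\bar D) \cap \calb_\alpha)$ is a compact piece of an imbedded analytic $q$-disk in $B^\infty \subset \mathbb H$; write $\mathbb H = \cc^M \oplus \mathbb H'$ where the disk is "spread out" in the first $M$ coordinates (this is where Royden's finite-dimensional technique enters). On $\cc^M$ one builds, by the classical Docquier–Grauert / Royden argument, a strictly plurisubharmonic function $\rho_\alpha$ vanishing on the disk, strictly positive off it, with bounded Levi form; adding $\|z'\|^2$ in the $\mathbb H'$ directions keeps strict plurisubharmonicity in the strong sense (Levi form $\geqslant c\|\cdot\|^2$) that the paper requires, since $\|z'\|^2$ contributes exactly $\|v'\|^2$ to the Levi form. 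Then I would patch the local functions with a smooth partition of unity $\{\eta_\alpha\}$ subordinate to $\{\calb_\alpha\}$ taking value $1$ near $\phi(\bar D)$, setting $\psi = \sum_\alpha \eta_\alpha (\rho_\alpha \circ \phi_\alpha)$ plus a large multiple of $\sum_\alpha \eta_\alpha \|\phi_\alpha\|^2$ to absorb the (bounded, compactly supported) negative contributions of the $d\eta_\alpha$ terms into the positive Levi forms. Finally $\calu_\varepsilon := \{\psi < \varepsilon\}$, intersected with $\bigcup_\alpha \calb_\alpha$, gives the desired neighborhood, and the sublevel sets of $\psi$ restricted to $\calu_\varepsilon$ furnish the exhaustion.

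The main obstacle I anticipate is twofold. First, the patching step: the standard trick of dominating cross terms $\mathrm{Re}(\partial \eta_\alpha \wedge \bar\partial \rho_\alpha)$ by a large multiple of a fixed strictly plurisubharmonic function works cleanly in finite dimensions because everything in sight is bounded on compacta, but in the Hilbert setting one must check that the function $\sum \eta_\alpha \|\phi_\alpha\|^2$ genuinely has Levi form bounded below by a positive constant on a neighborhood of $\phi(\bar D)$ — this uses that only finitely many charts are involved and that the $\phi_\alpha$ are biholomorphisms onto $B^\infty$ with uniformly controlled derivatives there. Second, one must verify the exhaustion condition $\overline{\psi^{-1}([0,t))} \subset \calu$ for small $t$: this follows because $\psi$ is proper on $\bigcup_\alpha \calb_\alpha$ near $\phi(\bar D)$ thanks to the $\|\phi_\alpha\|^2$ terms forcing sublevel sets to stay in the interior, but writing this carefully requires the uniform separation of $\phi(\bar D)$ from the chart boundaries established at the outset. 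Everything else — holomorphicity of transitions, local Royden construction, behavior as $\varepsilon \to 0$ — is routine given the finite-dimensional input.
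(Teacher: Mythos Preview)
The paper does not give a proof of Theorem~\ref{royden-w}: it is quoted as one of ``the following two statements proved in \cite{A-Z}'' (see the sentence just before Definition~\ref{1compdef}), and no argument appears in the present text. There is therefore no in-paper proof to compare your proposal against; to see whether your route matches the authors' you would need to consult \cite{A-Z} directly.

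That said, your sketch is a plausible Hilbert-space adaptation of the Royden/Docquier--Grauert scheme, and the two obstacles you isolate --- controlling the cross terms coming from the partition of unity, and verifying the exhaustion condition near the boundaries of the chart balls --- are indeed the places where the real work lies. One point worth tightening: rather than saying the disk is ``spread out'' in some $\cc^M$, use the imbedding to write $\phi(\bar D)\cap\calb_\alpha$ in each chart as the graph of a holomorphic map from an open set $V_\alpha\subset\cc^q$ into the complementary subspace $\mathbb H'$; then $\rho_\alpha$ can simply be the squared norm of the deviation from that graph, which is already strictly plurisubharmonic in the strong sense (Levi form $\geqslant c\,\|\cdot\|^2$) and vanishes exactly on the disk. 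This makes the subsequent patching estimates cleaner and avoids invoking a separate finite-dimensional construction.
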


\noindent Here an analytic $q$-disk in a complex Hilbert manifold $\calx$ is a holomorphic imbedding $\phi $ of a neighborhood of a closure of a relatively compact strongly pseudoconvex domain $D\Subset \cc^q$ into $\calx$. 
We shall also need the following lemma from \cite{A-Z}, see Lemmma 3.1 there.
\begin{lem}
\label{cont}
Let $\phi_n:\bar D\to \calx$ be a sequence of analytic $q$-disks in a complex Hilbert manifold $\calx$ and
let $\Phi_n$ be their graphs. Suppose that there exists an analytic disk 
$\phi_0:\bar D\to \calx$ with the graph $\Phi_0$ such that for any neighborhood $\calv\supset \Phi_0$ 
one has $\Phi_n\subset \calv$ for $n>>1$. Then $\phi_n$ converges uniformly on $\bar D$ to $\phi_0$.
\end{lem}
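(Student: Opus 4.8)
The plan is to exploit the single most important structural feature of the objects involved: each $\Phi_n$ is a \emph{graph} over $\bar D$, so the second coordinate of a point of $\Phi_n$ is completely determined by its first coordinate. Consequently, if I can trap the graphs $\Phi_n$ inside a neighborhood of $\Phi_0$ whose vertical fibers (over each fixed $\zeta\in\bar D$) have arbitrarily small diameter, then membership $(\zeta,\phi_n(\zeta))\in\calv$ forces $\phi_n(\zeta)$ to be close to $\phi_0(\zeta)$, and this closeness is automatically uniform in $\zeta$. So the whole lemma reduces to producing, for each $\epsilon>0$, one such thin neighborhood of $\Phi_0$ and then invoking the hypothesis. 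Throughout I work inside $\bar D\times\calx$, where the graphs live and where the neighborhoods $\calv$ of the hypothesis are taken.

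To build the thin neighborhoods I would first note that $\calx$, being a Hausdorff, second countable manifold modelled on a separable Hilbert space, is regular and hence metrizable by Urysohn's theorem; fix a compatible metric $d$. The map $\Psi:\bar D\times\calx\to[0,\infty)$ defined by $\Psi(\zeta,x)=d\big(x,\phi_0(\zeta)\big)$ is continuous, being the composition of the continuous metric with the continuous map $(\zeta,x)\mapsto(x,\phi_0(\zeta))$, and its zero set is exactly $\Phi_0$. For $\epsilon>0$ set $\calv_\epsilon:=\Psi^{-1}\big([0,\epsilon)\big)$; this is open, being the preimage of an open set under a continuous map, and it contains $\Phi_0$, so it is an admissible neighborhood. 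By construction every $(\zeta,x)\in\calv_\epsilon$ satisfies $d(x,\phi_0(\zeta))<\epsilon$, which is the required control on the vertical fibers.

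With these neighborhoods in hand the conclusion is immediate. Fix $\epsilon>0$. Applying the hypothesis to $\calv=\calv_\epsilon$ yields an integer $N$ with $\Phi_n\subset\calv_\epsilon$ for all $n>N$. For such $n$ and every $\zeta\in\bar D$ the graph point $(\zeta,\phi_n(\zeta))$ lies in $\calv_\epsilon$, whence $d\big(\phi_n(\zeta),\phi_0(\zeta)\big)<\epsilon$. Taking the supremum over $\zeta$ gives $\sup_{\zeta\in\bar D}d\big(\phi_n(\zeta),\phi_0(\zeta)\big)\leqslant\epsilon$ for all $n>N$, which is precisely the uniform convergence of $\phi_n$ to $\phi_0$ on $\bar D$.

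The only delicate point will be the construction of $\calv_\epsilon$, that is, guaranteeing that a single neighborhood of the limit graph controls the vertical distance uniformly over the whole of $\bar D$; the continuity of $\Psi$, equivalently the uniform continuity of $\phi_0$ on the compact set $\bar D$, is exactly what makes this uniform control possible. If one prefers to avoid invoking metrizability of $\calx$, the same neighborhood can be produced by covering the compact set $\phi_0(\bar D)$ by finitely many coordinate charts, measuring distances in the Hilbert-space coordinates of each chart, and gluing the local estimates via a Lebesgue-number argument on $\bar D$; this is the only step requiring any bookkeeping, and it is worth emphasizing that neither the holomorphicity nor the embedding property of the disks enters the argument, only the continuity of the maps and the graph structure.
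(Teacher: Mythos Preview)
Your argument is correct. The observation that the graphs $\Phi_n$ are single-valued over $\bar D$ reduces the problem to manufacturing, for each $\epsilon>0$, an open neighborhood of $\Phi_0$ in $\bar D\times\calx$ whose fibers over points of $\bar D$ have $d$-diameter at most $\epsilon$; your function $\Psi(\zeta,x)=d(x,\phi_0(\zeta))$ does exactly this, and the hypothesis then finishes the job in one line. Your remark that neither holomorphicity nor the embedding property is used is accurate: this is really a statement about continuous graphs over a compact base.

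As for comparison with the paper: the paper does not prove this lemma at all, it simply quotes it from \cite{A-Z} (Lemma~3.1 there). Your proof is therefore more than the paper itself provides, and it is entirely self-contained. The only point that might deserve one extra sentence of justification is the metrizability of $\calx$: the paper's standing hypothesis that all manifolds are Hausdorff and second countable, together with local metrizability (charts into a Hilbert space), does yield metrizability, but this is a small topological fact one might want to cite or, as you suggest in your last paragraph, bypass via a finite chart cover of the compact image $\phi_0(\bar D)$ and a Lebesgue-number argument. Either route is fine; the logical structure of your proof is sound.
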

Now we return to the proof of the theorem \ref{thm 2} from the introduction. 
\begin{theo}
\label{env-hol}
Let $\calx$ be a $q$-Hilbert-Hartogs manifold. Then every holomorphic map \\ $F:W^{k,2}(S,H_q^n(r))\to \calx$ 
extends to a holomorphic map $\tilde F : W^{k,2}(S, \Delta^q\times \Delta^n) \to \calx$.
\end{theo}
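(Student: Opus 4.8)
The strategy is to reduce the infinite-dimensional extension problem to the finite-dimensional Hartogs extension already available for $\calx$ (via the hypothesis that $\calx$ is $q$-Hilbert-Hartogs, together with Theorem \ref{qn-pm} for $\calx$), by slicing the loop-space through families of analytic disks and using Theorem \ref{royden-w} and Lemma \ref{cont} to control convergence of the extensions. Concretely, given a point $u \in W^{k,2}(S,\Delta^q\times\Delta^n)$ we want to define $\tilde F(u)$. The idea is: connect $u$ to a point lying in the source $W^{k,2}(S,H_q^n(r))$ through a holomorphic family parametrised by a finite-dimensional polydisk (or Hartogs figure), push that family forward by $F$, and extend the resulting finite-dimensional holomorphic map into $\calx$.

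First I would set up the slicing. Fix $u\in W^{k,2}(S,\Delta^q\times\Delta^n)$. Writing the $\Delta^q\times\Delta^n$-valued map $u$ coordinate-wise and using the standard Hartogs trick, I would produce, for each point $w$ near $u$, a holomorphic map $\Phi_u : H_q^n(r') \to W^{k,2}(S,\Delta^q\times\Delta^n)$ (for suitable $r'$ close to $1$) whose image meets $W^{k,2}(S,H_q^n(r))$ on the ``thin'' part of the Hartogs figure and which sends the central point of $\Delta^{q+n}$ to $u$. The key computational point is that the rescaling maps $(\zeta,x)\mapsto F\big(\zeta, (\lambda \zeta, \mu x)\big)$-type operations act continuously on $W^{k,2}(S,-)$ — this is exactly what Lemma \ref{lemp} is designed to verify, since it characterises holomorphic maps into $W^{k,2}(S,X)$ by pointwise holomorphicity plus Sobolev-continuity. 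Composing with $F$ gives a holomorphic map $H_q^n(r')\to\calx$ which, because $\calx$ is $q$-Hilbert-Hartogs, extends by Theorem \ref{qn-pm} to $\Delta^{q+n}\to\calx$; evaluating the extension at the centre defines $\tilde F(u)$.

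Next I would prove that $\tilde F$ is well-defined (independent of the chosen slicing family) and holomorphic. Well-definedness follows from the uniqueness of analytic continuation: two slicing families through $u$ agree on a nonempty open subset of the loop space where $F$ is already defined, hence their $\calx$-valued extensions agree, so $\tilde F(u)$ does not depend on choices. For holomorphicity I would again appeal to Lemma \ref{lemp}-type reasoning at the target side: it suffices to check that $\tilde F$ restricted to any analytic disk in $W^{k,2}(S,\Delta^q\times\Delta^n)$ is holomorphic, and for that I would invoke Theorem \ref{royden-w} to get a $1$-complete (hence, in particular, pseudoconvex) neighborhood of the relevant disk-graph, on which Hartogs-type extension in the $\calx$-valued setting is controlled, with Lemma \ref{cont} supplying the uniform convergence needed to pass the holomorphicity of the finite-dimensional extensions to the limit. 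Continuity of $\tilde F$ in the Sobolev topology should come out of the same convergence argument.

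\textbf{Main obstacle.} The delicate point is not the formal Hartogs trick but verifying that all the auxiliary maps built from $F$ by rescaling and restriction genuinely land in, and depend continuously/holomorphically on, the Sobolev manifolds involved — i.e.\ checking the hypotheses of Lemma \ref{lemp} for each constructed family, uniformly enough that the extension $\tilde F(u)$ varies holomorphically with $u$. This is where Theorem \ref{Ck-Sobo-dim} (composition of $\mathcal C^k$ maps with $W^{k,2}$ maps stays $W^{k,2}$) and the explicit product estimate of Lemma \ref{product sobo} do the real work: they guarantee that the coordinate rescalings $(\lambda\zeta,\mu x)$ and the chart maps of Lemma \ref{chart-hilbert-manifold} operate boundedly on $W^{k,2}(S,-)$. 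A secondary subtlety is that $S$ has boundary, so one must make sure the slicing and the partition-of-unity arguments behave at $\partial S$ — but since $S$ is compact this is handled exactly as in the manifold-structure construction of Section 2.
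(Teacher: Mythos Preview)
Your outline captures the overall architecture --- slice through finite-dimensional analytic disks, apply the $q$-Hilbert-Hartogs hypothesis, then argue continuity via Theorem~\ref{royden-w} and Lemma~\ref{cont} --- and this is indeed the paper's scheme. But two points are genuinely missing.

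First, the slicing family is left vague, and ``rescaling maps'' will not do the job. To extend $F$ to a given $f=(f^q,f^n)$ you need an analytic $q$-disk $\phi_{f*}:\bar\Delta^q\to W^{k,2}(S,\Delta^{q+n})$ with $\phi_{f*}(0)=f$ \emph{and} $\phi_{f*}(\partial\Delta^q)\subset W^{k,2}(S,H_q^n(r))$; linear scaling in the $\Delta^q$ factor does not push $\partial\Delta^q$ toward $\partial\Delta^q$. The paper uses the explicit M\"obius automorphisms $h_a(z)=\big(\tfrac{a_j-z_j}{1-\bar a_j z_j}\big)_j$ with $a=f^q(s)$, and then an extra parameter $t$ multiplying $f^n$ to produce a $(q+1)$-dimensional Hartogs figure mapped into $W^{k,2}(S,H_q^n(r))$.

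Second, and more seriously, your holomorphicity step has a real gap. Any construction with the two properties above necessarily involves $\bar f^q(s)$, so the natural extension $\tilde F(f)$ is \emph{not} a priori holomorphic in $f$: along a line $f+\lambda g$ it depends on both $\lambda$ and $\bar\lambda$. Theorem~\ref{royden-w} and Lemma~\ref{cont} yield continuity of $\tilde F$, nothing more; they cannot upgrade a continuous map to a holomorphic one, and Lemma~\ref{lemp} concerns maps \emph{into} $W^{k,2}(S,X)$, not out of it. The paper's key device is to replace $\bar\lambda$ by an \emph{independent} holomorphic parameter $\mu$, obtaining a holomorphic $(q+2)$-disk $\theta(z,\lambda,\mu,\cdot)$, extend $F\circ\theta_*$ by the Hartogs hypothesis, and then observe that at $z=0$ one has $\theta(0,\lambda,\mu,s)=f(s)+\lambda g(s)$, which is independent of $\mu$. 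Hence the extended value at $z=0$ is the same for $\mu=\bar\lambda$ as for $\mu=0$, and the latter is manifestly holomorphic in $\lambda$. This ``promote $\bar\lambda$ to an independent variable that drops out at the centre'' is the essential trick for G\^ateaux holomorphicity, and your proposal does not contain it.
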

\proof The proof will be achieved in a number of steps. First we shall construct some ``natural extension'' 
of $F$. After that we shall prove that the extension is continuous and finally that it is holomorphic.

\smallskip\noindent{\slsf Step 1. Natural extension.} Let $f=(f^q,f^n):S\to \Delta^q\times \Delta^n$ 
be an element of $W^{k,2}(S, \Delta^q\times \Delta^n)$. 
Here $f^q$ and $f^n$ are componnents of $f$. 
We want to extend $F$ to $f$. This will be done
along an appropriate analytic disc which passes through $f$. Consider the mapping 
$\phi_f : \bar\Delta^q\times S\to \Delta^q\times \Delta^n$ defined as
\begin{equation}
\label{an-disk1}
\phi_f : (z,s) \to  \left(h_{f^q(s)}(z),f^n(s)\right).
\end{equation}
Here $h_a$ is the following automorphism of $\Delta^q$ interchanging $a=(a_1,...,a_q)$ and $0$:
\[
 h_a(z) = \left(\frac{a_1-z_1}{1-\bar a_1z_1},..., \frac{a_q-z_q}{1-\bar a_qz_q}\right).
\]
Notice that due to the compactness of $f^q(S)\subset \Delta^q$ authomorphisms
$h_{f^q(s)}$ are defined for $z$ in a fixed (independant on $s$) neighborhood of $\bar\Delta^q$,
and therefore such is $\phi_f$. Denote by $\phi_{f*}:\bar\Delta^q\to W^{k,2}(S,\Delta^{q+n})$ the 
analytic $q$-disk in $W^{k,2}(S, \Delta^q\times \Delta^n)$ represented by $\phi_f$, \textit{i.e} $\phi_{f*}(z)
\in W^{k,2}(S, \Delta^{q+1})$ acts as follows
\begin{equation}
\label{an-disk2}
\phi_{f*}(z) : s \to  \left(\frac{f_1^q(s)-z_1}{1-\bar f_1^q(s)z_1},..., \frac{f_q^q(s)-z_q}
{1-\bar f_q^q(s)z_q},f^n(s)\right).
\end{equation}
Here $f^q(s) = (f_1^q(s),...,f_q^q(s))$ is the $q$-component of $f$. Denote by $\Phi_f = 
\phi_{f*}(\bar \Delta^q)$ the image of $\phi_{f*}$. As in lemma \ref{cont} we mark here and everywhere with $*$ analytic disks 
$D\to W^{k,2}(S,X)$ represented by maps $D\times S\to X$ and by capital letters we mark the images 
of these discs. Our $\phi_f$ possesses the following properties:
\begin{itemize}
\item[i)] $\phi_{f*}(0)=f$, \textit{i.e.} is our loop $f$.

\item[ii)] For $z\in\partial \Delta^q$ one has that $\phi_{f*} (z)(S)\subset A^q_{1-r, 1}
\times \Delta^n$, therefore $$\partial \Phi_f \deff \phi_{f*}(\partial\Delta^q)\subset W^{k,2}(S,H_q^n(r)).$$
\end{itemize}
Indeed, for $z$ close to $\d\Delta^q$  some $z_j$ is close to $\d\Delta$ and then the $j$-component of $h_{f^q(s)}(z)$ 
also is close to $\d\Delta$ for all $s\in S$ as required. 
%\begin{rem} \rm
%Strictly speaking for $z\in \d\Delta^q$ $\phi_{f*} (z)(S)\subset   \d\Delta^q\times \Delta^n\not\subset 
%A^q_{1-r, 1}\times\Delta^n$, so a certain rescailing should be made. But we shall not pay attention on 
%such obvious details.
%\end{rem}

Remark that due to the second item above
our map $F$ is defined and holomorphic near the boundary of the analytic $q$-disc $\Phi_f$. In order
to assign to $f$ some ``natural'' value $\tilde F(f)$, which we shall call a ``natural extension''
of $F$, we shall extend $F$ holomorphically to the $q$-disc $\Phi_f\ni f$.

\smallskip 
Consider the analytic $(q+1)$-disk $\psi_*$ in $W^{k,2}(S,\Delta^q\times \Delta^n)$ 
represented by
\begin{equation}
\label{an-disk3}
\psi^t(z,s) \deff  \psi (z,t,s) \deff \left(h_{f^q(s)}(z),
tf^n(s)\right), \quad z\in \bar\Delta^q, \quad |t| \le 1+\delta , \quad s\in S
\end{equation}
for an appropriate $\delta >0$ small enough. Here we use the compactness of $f^n(S)$.
Remark that
\begin{itemize}
\item[iii)] $\psi^0(\cdot , \cdot)$ takes its values in $\Delta^q\times \{0\} \subset H_q^n(r)$ and therefore for $|t|$ small $\psi^t(.,.)$ takes its values in $H_q^n(r)$ by continuity.  

\item[iv)] $\psi^1 = \phi_f$.

\item[v)] For all $t\in \bar\Delta_{1+\delta}$ one has that $\d\Psi^t =\Psi_*^t(\partial\Delta^q) \subset W^{k,2}(S,
H_q^n(r))$ by the second item as before. 
\end{itemize}
Here $\Psi$ and $\Psi^t$ stand for the images of $\psi_*$ and $\psi^t_*$ respectively, i.e. for every t we see $\Psi_*^t$ as an analytic $q$-disk, while $\Psi_*$ as an analytic $(q+1)$-disk. 
Therefore for $\delta >0$ small enough the Hartogs figure
\begin{equation}
\label{(hart-d)}
H_{q}^1(\delta)\deff \{(z,t): ||z||<1 + \delta , |t|<\delta \text{ or }
1-\delta<||z||<1+\delta, |t|<1 + \delta\}
\end{equation}
is mapped by $\psi_*$ to $W^{k,2}(S,H_q^n(r))$ and consequently the composition $F\circ \psi_*
:H_{q}^1(\delta)\to \calx$ is well defined and holomorphic. Due to the assumed $q$-Hartogsness of 
$\calx$ this composition $F\circ \psi_* $ holomorphically extends to the associated polydisc 
$\Delta^{q+1}_{1+\delta}$.  In particular it extends to $\Delta^q\times \{1\}$, \ie  \ $F$ holomorphically
extends onto the $q$-disc $\Phi_f\ni f$.
\begin{equation}
\eqqno(pnt-ext1)
\text{\it Denote by }\quad \widetilde{F\circ \psi_*}  \quad \text{\it  this extension and set } 
\quad \tilde F(f) \deff (\widetilde{F\circ\psi_*})(0,1).
\end{equation}
Remark that $\tilde F$ is indeed an extension of $F$, \ie \ that $\tilde F(f) = F(f)$
for $f\in W^{k,2}(S, H_q^n(r))$ because $\tilde F$ is a holomorphic extension of $F$ from 
$\Psi\cap W^{k,2}(S, H_q^n(r))$. We call $\tilde F$ the ``natural extension'' of $F$ .

\smallskip\noindent{\slsf Step 2.} {\it The natural extension is continuous.}  Let $f'\in W^{k,2}(S,
\Delta^q\times \Delta^n)$ be close to $f$. Construct $q$-disc $\phi_{f'*}$ and $(q+1)$-disc 
$\psi'_*$ for $f'$ as we did for $f$. Denote by $\Gamma_{\Psi}$ and 
$\Gamma_{\Psi'}$ the graphs of $\widetilde{F\circ \psi_*}$ and $\widetilde{F\circ \psi'_*}$ in 
$\Delta^{q+1}_{1+\delta}\times \calx$ correspondingly. Due to Lemma \ref{cont} all we need to prove is that 
$\Gamma_{\Psi'}$ enters to a given neighborhood of $\Gamma_{\Psi}$ provided $f'$ is sufficently close
to $f$.
% \st{ But due to Theorem this given neighborhood, say $\calv$, can be assumed to be $1$-complete.}
% \st{ Moreover, for an appropriate $\delta >0$ the graph $\Gamma_{\Psi'}$ over the Hartogs figure $H_q^1(\delta)$ as in %\ref{(hart-d)}
% enters to a given neighborhood $\calv$ of the graph $\Gamma_{\Psi}$ because $\psi'$ is close to $\psi$ when restricted to $H_q^1(\delta)$. }
% \st{And then $\Gamma_{\Psi'}\subset\calv$ by the maximum principle applied to the psh. exhausting function of $\calv$. The step is proved.}
% \color{red}
Due to Theorem \ref{royden-w} one can choose a 1-complete neighborhood $\calv$ of $\Gamma_\Psi$. Moreover, for an appropriate $\delta >0$ the graph $\Gamma_{\Psi'}$ over the Hartogs 
figure $H_q^1(\delta)$ as in (\ref{(hart-d)}) enters to the neighborhood $\calv$ of the graph 
$\Gamma_{\Psi}$ because $\psi'$ is close to $\psi$ and takes it values in $W^{k,2}(S,H^n_q(r)) $ when restricted to $H_q^1(\delta)$. A priori there is no reason that all the graph enters in $\calv$. But the maximum principle applied to the plurisubharmonic exhausting function 
of $\calv$ all the graph $\Gamma_{\Psi'}$ enters in the neighborhood $\calv$. The lemma \ref{cont} permits us to obtain the continuity of the map and the step is proved. 
\color{black}

\smallskip\noindent{\slsf Step 3.} {\it Holomorphicity.} All what is left to prove is that our 
natural extension $\tilde F$ is G\^ateaux holomorphic. Fix $f\in W^{k,2}(S,\Delta^q \times
\Delta^n)$, $g\in W^{k,2}(S,\cc^{q+n})$ and consider the complex affine line $L \deff \{f+\lambda g:  
\lambda \in \cc\}\subset W^{k,2}(S,\cc^{q+1+n})$. We need to prove that $\tilde F|_L$ is holomorphic 
in a neighborhood of zero. Fix an $\epsilon >0$ sufficiently small and consider an analytic $(q+2)$-disc 
$\theta_*:\bar\Delta^q\times\bar\Delta_{\epsilon}^2\to W^{k,2}(S,\cc^{q+n})$
%\Delta^q_{1+r}\times \Delta)$
represented by the mapping :
\begin{equation}
\eqqno(an-disk4)
\theta : (z, \lambda , \mu , s) \to \left(\frac{f_1^q(s)+\lambda g_1^q(s)-z_1}{1-(\bar f_1^q(s) +
\mu \bar g_1^q(s))z_1},...,\frac{f_q^q(s)+\lambda g_q^q(s)-z_q}{1-(\bar f_q^q(s) +
\mu\bar g_q^q(s))z_q},f^n(s)+\lambda g^n(s)\right).
\end{equation}
Denote by $\Theta = \text{Im} \theta_*$ its image.
Notice that for $\epsilon >0$ small enough the $q$-component of \eqqref(an-disk4) defines for every fixed 
$|\lambda|, |\mu | \le \epsilon$ and $s\in S$ a holomorphic imbedding of $\bar\Delta^q$ to a neighborhood of 
$\bar\Delta^q$ which is uniformly close to the authomorphism 
\[
 z\to  \left(\frac{f_1^q(s)-z_1}{1-\bar f_1^q(s)z_1},..., \frac{f_q^q(s)-z_q}
{1-\bar f_q^q(s)z_q}\right)
\]
of $\Delta^q$. Therefore for every $|\lambda |, |\mu|\le \epsilon$ the disk $\theta_*$ has values in $ W^{k,2}(S,\Delta^q_{1+r}\times \Delta^n)$ and the arguments of Step 1 can be repeated to 
$\theta (\cdot , \lambda , \mu ,\cdot)$ on the place of $\phi_f$:

Consider an analytic $(q+3)$-disk $\psi_*$ in $W^{k,2}(S,\cc^{q+n})$ 
represented by
\begin{equation}
\label{an-disk33}
\psi^t(z,\lambda, \mu, s) \deff  \psi (z,\lambda,\mu,t,s) \deff \left(\theta^q(z,\lambda,\mu, s), t \theta^n(z,\lambda,\mu, s)\right), \quad z\in \bar\Delta^q, \quad |t| \le 1+\delta , \quad s\in S
\end{equation}
for an appropriate $\delta >0$ small enough. Here $\theta^q$ and $\theta^n$ are components of $\theta = (\theta^q, \theta^n)$.
Remark that
\begin{itemize}
\item $\psi^0$ takes its values in $\Delta^q\times \{0\} \subset H_q^n(r)$ and therefore for $|t|$ small $\psi^t$ takes its values in $H_q^n(r)$ by continuity.  

\item $\psi^1 = \theta$.

\item For all $t\in \bar\Delta_{1+\delta}$ one has that $\d\Psi^t \subset W^{k,2}(S,
H_q^n(r))$.
\end{itemize}

This gives a holomorphic extension 
of $F\circ\theta_*$ from $\d\Delta^q\times \Delta^2_{\epsilon}$ to $\bar\Delta^q\times \Delta^2_{\epsilon}$.
We denote  this extension as $\hat F$. Notice now the following properties of $\theta$ and $\hat F$:

\smallskip\sli $\theta (\cdot , \lambda , \bar\lambda , \cdot) = \phi_{f+\lambda g}(\cdot , \cdot) $;

\smallskip\slii $\theta (0, \lambda , \mu , \cdot) = f(\cdot) + \lambda g(\cdot)$, in particular, this
doesn't depend on $\mu$.

\medskip\noindent
Properties \textit{iv)} and \textit{v)} imply that 
\[
\tilde F(f+\lambda g) =\left(\ext F|_{\phi_{f+\lambda g}}\right)|_{z=0} = 
\left(\ext F|_{\theta (\cdot , \lambda , \bar\lambda , \cdot)}\right)|_{z=0}
= \left(\ext F|_{\theta (\cdot , \lambda , \mu , \cdot)}\right)|_{\mu =0, z=0}=
\]
\[
=\hat F|_{\mu =0 , z=0} = \hat F(0, \lambda , 0).
\]
Here by $\ext F|_{\phi_{f+\lambda g}}$ we denote the extension of $F$
along the $q$-disc $\phi_{f+\lambda g}$ (and then taking the value of this extension 
at $z=0$). This was the definition of the natural extension. Therefore 
the first equality is justified. As for second it is justified by (\sli . 
The third equality folows from (\slii since at $z=0$ nothing depends on $\mu$ and 
$\mu =\bar\lambda$ can be replaced by $\mu =0$. But this is the extension $\hat F$
evaluated at $z=0, \lambda , \mu =0$ and the latter holomorphically depends on $\lambda$.
Therefore such is the left hand side $\tilde F (f+\lambda g)$. The holomorphicity
of $\tilde F$ is proved.

\noindent \textbf{Remark.}
The theorem just proved was stated in \cite{A-I}, see Theorem 3.1 there, for the case of compact $S$ without boundary. The step 1 of the proof was presented there as well. As for step 2 and 3 the details were missing. 
\smallskip\qed

%\bibliographystyle{plain}
%\bibliography{biblio.bib}

\ifx\undefined\bysame
\newcommand{\bysame}{\leavevmode\hbox to3em{\hrulefill}\,}
\fi

\def\entry#1#2#3#4\par{\bibitem[#1]{#1}
{\textsc{#2 }}{\sl{#3} }#4\par\vskip2pt}

\end{document}